\newtheorem*{assumptions*}{Assumptions}
\renewcommand\eqref[1]{(\ref{#1})}
\newcommand*{\mint}[1]{%
  \mint@l{#1}{}%
}
\newcommand*{\mint@l}[2]{%
  \@ifnextchar\limits{%
    \mint@l{#1}%
  }{%
    \@ifnextchar\nolimits{%
      \mint@l{#1}%
    }{%
      \@ifnextchar\displaylimits{%
        \mint@l{#1}%
      }{%
        \mint@s{#2}{#1}%
      }%
    }%
  }%
}
\newcommand*{\mint@s}[2]{%
  \@ifnextchar_{%
    \mint@sub{#1}{#2}%
  }{%
    \@ifnextchar^{%
      \mint@sup{#1}{#2}%
    }{%
      \mint@{#1}{#2}{}{}%
    }%
  }%
}
\def\mint@sub#1#2_#3{%
  \@ifnextchar^{%
    \mint@sub@sup{#1}{#2}{#3}%
  }{%
    \mint@{#1}{#2}{#3}{}%
  }%
}
\def\mint@sup#1#2^#3{%
  \@ifnextchar_{%
    \mint@sup@sub{#1}{#2}{#3}%
  }{%
    \mint@{#1}{#2}{}{#3}%
  }%
}
\def\mint@sub@sup#1#2#3^#4{%
  \mint@{#1}{#2}{#3}{#4}%
}
\def\mint@sup@sub#1#2#3_#4{%
  \mint@{#1}{#2}{#4}{#3}%
}
\newcommand*{\mint@}[4]{%
  \mathop{}%
  \mkern-\thinmuskip
  \mathchoice{%
    \mint@@{#1}{#2}{#3}{#4}%
        \displaystyle\textstyle\scriptstyle
  }{%
    \mint@@{#1}{#2}{#3}{#4}%
        \textstyle\scriptstyle\scriptstyle
  }{%
    \mint@@{#1}{#2}{#3}{#4}%
        \scriptstyle\scriptscriptstyle\scriptscriptstyle
  }{%
    \mint@@{#1}{#2}{#3}{#4}%
        \scriptscriptstyle\scriptscriptstyle\scriptscriptstyle
  }%
  \mkern-\thinmuskip
  \int#1%
  \ifx\\#3\\\else_{#3}\fi
  \ifx\\#4\\\else^{#4}\fi
}
\newcommand*{\mint@@}[7]{%
  \begingroup
    \sbox0{$#5\int\m@th$}%
    \sbox2{$#5\int_{}\m@th$}%
    \dimen2=\wd0 %
    \let\mint@limits=#1\relax
    \ifx\mint@limits\relax
      \sbox4{$#5\int_{\kern1sp}^{\kern1sp}\m@th$}%
      \ifdim\wd4>\wd2 %
        \let\mint@limits=\nolimits
      \else
        \let\mint@limits=\limits
      \fi
    \fi
    \ifx\mint@limits\displaylimits
      \ifx#5\displaystyle
        \let\mint@limits=\limits
      \fi
    \fi
    \ifx\mint@limits\limits
      \sbox0{$#7#3\m@th$}%
      \sbox2{$#7#4\m@th$}%
      \ifdim\wd0>\dimen2 %
        \dimen2=\wd0 %
      \fi
      \ifdim\wd2>\dimen2 %
        \dimen2=\wd2 %
      \fi
    \fi
    \rlap{%
      $#5%
        \vcenter{%
          \hbox to\dimen2{%
            \hss
            $#6{#2}\m@th$%
            \hss
          }%
        }%
      $%
    }%
  \endgroup
}
\numberwithin{equation}{section}
\theoremstyle{plain}
\newtheorem{thm}{Theorem}[section]
\newtheorem{cor}[thm]{Corollary}
\newtheorem{lem}[thm]{Lemma}
\theoremstyle{definition}
\newtheorem{defn}[thm]{Definition}
\newtheorem{rem}[thm]{Remark}
\newtheorem{ex}[thm]{Example}
\newtheorem{as}[thm]{Assumption}
\newcommand{\G}{\mathbb{G}}
\def\G{\mathbb{G}}
\def\L{\mathcal{L}_{p}}
\def\HH{\mathcal{H}}
\def\G{{\mathbb G}}
\def\L{\mathcal{L}}
\def\Rn{\mathbb{R}^{n}}
\def\Rp{\mathbb{R}^{p}}
\def\RN{\mathbb{R}^{N}}
\def\Ga{\Gamma}
\title[Fujita exponent for heat equation with H\"{o}rmander vector fields]{Fujita exponent for heat equation with H\"{o}rmander vector fields}
\author[M. Chatzakou]{Marianna Chatzakou}
\address{
	Marianna Chatzakou:
	\endgraf
    Department of Mathematics: Analysis, Logic and Discrete Mathematics
    \endgraf
    Ghent University, Belgium
  	\endgraf
	{\it E-mail address} {\rm marianna.chatzakou@ugent.be}
		}
\author[A. Kassymov]{Aidyn Kassymov}
\address{
  Aidyn Kassymov:
  \endgraf
  Institute of Mathematics and Mathematical Modeling
  \endgraf
  28 Shevchenko str.
  \endgraf
  050010 Almaty
  \endgraf
  Kazakhstan
  \endgraf
	{\it E-mail address}  {\rm kassymov@math.kz}}
\author[M. Ruzhansky]{Michael Ruzhansky}
\address{
  Michael Ruzhansky:
  \endgraf
  Department of Mathematics: Analysis, Logic and Discrete Mathematics
  \endgraf
  Ghent University, Belgium
  \endgraf
 and
  \endgraf
  School of Mathematical Sciences
  \endgraf
  Queen Mary University of London
  \endgraf
  United Kingdom
  \endgraf
  {\it E-mail address} {\rm michael.ruzhansky@ugent.be}
  }
\newcommand{\vertiii}[1]{{\left\vert\kern-0.25ex\left\vert\kern-0.25ex\left\vert #1 
    \right\vert\kern-0.25ex\right\vert\kern-0.25ex\right\vert}}
\begin{document}

\thanks{The authors are supported by the FWO Odysseus 1 grant G.0H94.18N: Analysis and Partial Differential Equations and by the Methusalem programme of the Ghent University Special Research Fund (BOF) (Grant number 01M01021). Marianna Chatzakou is a postdoctoral fellow of
the Research Foundation – Flanders (FWO) under the postdoctoral grant No 12B1223N. The research of Michael Ruzhansky and Aidyn Kassymov is also supported by FWO grant G083525N and the MHES of the Republic Kazakhstan grant (AP23484106), respectively. \\
\indent
{\it Keywords:} Fujita's critical exponent, heat equation, H\"{o}rmander's vector field, global lifting theorem.
{\it Acknowledgments:}
The authors would like to thank  Dr. Berikbol T. Torebek for comments and suggestions.}

\begin{abstract} 
In this paper, we show global existence and non-existence results for the heat equation with some of the squares of  smooth vector fields on $\Rn$ satisfying H\"{o}rmander's rank condition with a non-linearity of the form $f(u)$, where $f$ is a suitable function and $u$ is the solution. In particular, when $f(u)=u^p$, we calculate the critical Fujita exponent.  We also give necessary conditions for blow-up or, alternatively, a sufficient condition for the existence of positive global solutions for time-dependent nonlinearities of the type $\varphi(t)f(u)$.
\end{abstract} 

\maketitle

\tableofcontents

\section{Introduction}
In 1966, Fujita in \cite{F66} proved that the initial value problem for the reaction-diffusion equation
\begin{equation}\label{Heat_Cauchyrn}
\begin{split}
& u_{t}(t,x)-\Delta u(t,x) =u^p(t,x),\,\, x\in \mathbb{R}^N,\, t>0, \\&
u(0,x)=u_0(x)\geq 0,\, x\in \mathbb{R}^N,
   \end{split}
\end{equation}
where $u_0\not\equiv 0$ has no global in time solution for $p$ in the range $1<p<p_F=1+\frac{2}{N}$, where  $p_F$ is called the \textit{critical Fujita exponent}, or just the \textit{Fujita exponent}. Later, it was shown in \cite{H73} and \cite{AW78} that this result holds for $p = p_F$ as well. 
Moreover, Weissler in \cite{Weissler} extended Fujita's previous result and proved that if $p>p_{F}$, $0\leq u_{0}\in L^{\gamma}(\Rn)$ with $\gamma\geq1$, and 
$$\int_{0}^{\infty}\|\text{e}^{-s\Delta}u_{0}\|_{L^{\infty}(\Rn)}^{p-1}ds<\frac{1}{p-1},$$
then there exists a non-negative global solution of \eqref{Heat_Cauchyrn}.

The Fujita phenomenon has  been observed in a broad range of equations with different types of non-linearities, on various geometries, and under assorted boundary conditions.  Despite the abundance of extensions and refinements of this problem, we do not intend to summarise them here; many open questions remain—particularly regarding, roughly speaking, the criticality, i.e., the study of the critical exponent $p_{F}$, in more general settings and types of non-linearities. For a recent exposition of the related literature we may refer to e.g. the work of Quittner and Souplet \cite{Soup}, and to the paper  \cite{Lev90}. 

Let us mention here some works on the topic which are particularly relevant to our analysis; the Fujita exponent for the heat equation in the case of an unbounded domain was considered in the sub-Riemannian setting, cf. \cite{Ruzhansky}. In the latter case, and more particularly, when the sub-Riemannian underlying manifold is a unimodular Lie group, the global well--posedeness depends on the volume growth, and their approach used heat kernel estimates. For the particular case of the Heisenberg groups $\mathbb{H}^n$  the problem was studied by several authors, \cite{Kirane 1, Kirane 2, BRT22, Birindelli, D'Ambrosio, D'Ambrosio1, CKR24}.  In the work \cite{CKR24} the authors considered the initial heat problem with the addition of a time dependent non-linearity term and showed the identification of the necessary and sufficient conditions for the global well-posedeness of the corresponding heat equation on $\mathbb{H}^1$ exactly as in it happens in the Euclidean case \cite{CH24}. 

In this work we study the Fujita problem on $\Rn$ where the non-linearity is not restricted to the more classical type of the form $f(u)=u^p$ and the differential operator $\L$ is not restricted to the elliptic operator $\Delta$, but arises via a system of vector fields $X=\{X_1,\ldots,X_m\}$ on $\Rn$   satisfying Assumption \ref{as1}, i.e., we consider the non-linear Cauchy problem of the form 
\begin{equation}
    \label{intro.prob}
    \begin{cases}
         u_{t}(t,x)-\L u(t,x)= f(u(t,x)),\,\,\,\,\,(t,x)\in \mathbb{R}_{+}\times \Rn,\\
          u(0,x)=u_{0}(x),\,\,\,\,\,x\in \Rn,
    \end{cases}
\end{equation}
where $\L=\sum\limits_{i=1}^{m}X_{i}^{2}$ is a H\"{o}rmander sum of squares.  Our main results can be summarised as follows:
\begin{thm}\label{THM.main}
Let $X$ be a system of $m$ vector fields on $\Rn$ satisfying Assumption \ref{as1}.

(i) Let $1<\alpha<\alpha_F=1+\frac{2}{q}$ where $q$ is as in \eqref{homdim}. Suppose that $f:[0,\infty)\rightarrow \mathbb{R}$ is a locally integrable function such that $f(u)\geq Bu^{\alpha}$ for some $B>0$. Let $0< u_{0}$ be a measurable function on $\Rn$. Then the differential inequality 
\begin{equation}\label{inEQ:main/thm}
    \begin{cases}
        u_{t}-\L u
        \geq f(u),\,\,\,\,\,(t,x)\in (0,+\infty)\times \Rn,\\ u(0,x)=u_{0}(x),\,\,\,\,x\in\Rn,
    \end{cases}
\end{equation}
    does not admit a (non-trivial) distributional solution $u \geq 0$ in $(0,\infty)\times \Rn$.
    
    (ii) Let $\alpha=\alpha_F=1+\frac{2}{q}$ where $q$ is as in \eqref{homdim}. Suppose that $f:[0,\infty)\rightarrow \mathbb{R}$ is a locally integrable function such that $f(u)\geq Bu^{\alpha}$ for some $B>0$. Let $0< u_{0}$ be a measurable function on $\Rn$. Then the differential equality 
\begin{equation*}
    \begin{cases}
        u_{t}-\L u=f(u),\,\,\,\,\,(t,x)\in (0,+\infty)\times \Rn,\\ u(0,x)=u_{0}(x),\,\,\,\,x\in\Rn,
    \end{cases}
\end{equation*}
    does not admit a (non-trivial) distributional solution $u \geq 0$ in $(0,+\infty)\times \Rn$.

    (iii) Let $\alpha>\alpha_F=1+\frac{2}{q}$ where $q$ is as in \eqref{homdim}. Suppose that $f:[0,\infty)\rightarrow [0,\infty)$ is a continuous and increasing function such that $f(u)\leq A u^{\alpha}$, $A>0$, and $0 \leq u_0 \in L^{\gamma}(\Rn)$, for some $\gamma \in [1,\infty)$. Then the Cauchy problem \eqref{intro.prob}  has a global classical solution under some extra assumptions given explicitly in Theorem \ref{thmgl}.
    
    (iv) In particular, for $\alpha>\alpha_F=1+\frac{2}{q}$ and $f$ as in (iii), let $0 \leq u_0 \in L^{1}(\Rn)$ be sufficiently small. Then the Cauchy problem \eqref{intro.prob} has a global classical solution in $L^1(\Rn)$.
    \end{thm}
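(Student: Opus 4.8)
The plan is to deduce part (iv) from part (iii) by verifying, for sufficiently small $0 \le u_0 \in L^1(\Rn)$, the ``extra assumptions'' of Theorem \ref{thmgl}. Since the global theory of \eqref{intro.prob} will be built on the Duhamel formulation
\[
u(t) = e^{t\L}u_0 + \int_0^t e^{(t-s)\L} f(u(s))\, ds ,
\]
the natural criterion (the sub-Riemannian analogue of Weissler's condition, cf. \cite{Weissler, Ruzhansky}) is a smallness bound of the form $\int_0^\infty \|e^{s\L}u_0\|_{L^\infty(\Rn)}^{\alpha - 1}\, ds < c$ for an explicit constant $c$ depending on $A$, $\alpha$. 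So the first step is to record the $L^1 \to L^\infty$ heat-semigroup estimate coming from Assumption \ref{as1}: by the global lifting theorem and the associated heat-kernel bounds, $\|e^{s\L}\|_{L^1(\Rn) \to L^\infty(\Rn)} \lesssim s^{-q/2}$ for all $s > 0$, where $q$ is the homogeneous dimension in \eqref{homdim}. This is the place where the hypotheses on $X$ really enter, and it is the analytic heart of the matter; I would expect it to be available from the heat-kernel machinery already set up for Theorems \ref{THM.main}(i)--(iii), so here it can be invoked rather than reproved.

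Next I would combine this with interpolation. From the contraction property $\|e^{s\L}\|_{L^1 \to L^1} \le 1$ and the bound above one gets $\|e^{s\L}u_0\|_{L^\infty} \le C \min\{s^{-q/2}\|u_0\|_1, \text{(bounded for small }s)\}$; more usefully, splitting the time integral at $s=1$ and using $\|e^{s\L}u_0\|_{L^\infty} \le C s^{-q/2}\|u_0\|_1$ on $(1,\infty)$ together with the short-time control on $(0,1)$, the convergence of $\int_1^\infty s^{-(q/2)(\alpha-1)}\, ds$ is exactly the condition $(q/2)(\alpha - 1) > 1$, i.e. $\alpha > 1 + 2/q = \alpha_F$, which is precisely our standing hypothesis. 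Hence the whole integral $\int_0^\infty \|e^{s\L}u_0\|_\infty^{\alpha-1}\, ds$ is finite and, crucially, is bounded by a constant times a positive power of $\|u_0\|_1$ (after absorbing the short-time part, which one handles by first requiring $\|u_0\|_1$ small enough that $\|e^{s\L}u_0\|_\infty$ stays in the regime where $f(u) \le A u^\alpha$ is the operative bound). Choosing $\|u_0\|_{L^1(\Rn)}$ small therefore makes this quantity smaller than the threshold $c(A,\alpha)$ required by Theorem \ref{thmgl}.

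With the smallness criterion verified, the final step is to run the standard contraction-mapping / monotone-iteration argument in the complete metric space $\{u \ge 0 : \sup_{t>0}(1+t)^{q/2}\|u(t)\|_\infty < \infty,\ \sup_{t>0}\|u(t)\|_1 < \infty\}$: the map $u \mapsto e^{t\L}u_0 + \int_0^t e^{(t-s)\L} f(u(s))\, ds$ is well-defined and a contraction there because $f$ is continuous, increasing, and dominated by $A u^\alpha$, so the nonlinear term is controlled by the same convergent integral; monotonicity of $f$ and positivity of the kernel give $u \ge e^{t\L}u_0 \ge 0$. Parabolic regularity (hypoellipticity of $\partial_t - \L$, again from H\"ormander's condition) upgrades the mild solution to a classical one, and the uniform $L^1$ bound along the iteration shows $u(t) \in L^1(\Rn)$ for all $t$, as claimed. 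The main obstacle is genuinely the sharp $s^{-q/2}$ decay of $\|e^{s\L}\|_{L^1 \to L^\infty}$ with the \emph{correct} exponent matching $\alpha_F$; everything downstream is the routine Weissler-type fixed-point scheme, so in the write-up I would state that estimate as a lemma (citing the lifting theorem) and spend the bulk of the argument on the clean deduction just sketched.
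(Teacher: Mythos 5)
Your proposal is correct and follows essentially the same route as the paper: the paper establishes the decay $\bigl\|\int_{\Rn}\Gamma(0,y;t,\cdot)u_0(y)dy\bigr\|_{L^{r}}\leq Ct^{-\frac{q}{2}(\frac1p-\frac1r)}\|u_0\|_{L^{p}}$ in Theorem \ref{THM:LpnormGamma} (kernel upper bound, volume lower bound \eqref{ocencc1}, Riesz--Thorin), and then in Remark \ref{REM:L1} splits the Weissler integral at $t=1$ exactly as you do to verify condition \eqref{congl} of Theorem \ref{thmgl}, whose monotone iteration supplies the global solution. The one point to make explicit is your ``short-time control'': for $u_0$ merely in $L^{1}(\Rn)$ the integral over $(0,1)$ need not converge (since $\frac{q}{2}(\alpha-1)>1$ in this regime), so, as in the paper's remark, one should take $u_0\in L^{1}(\Rn)\cap L^{\infty}(\Rn)$ with both norms small.
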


We note that in parts (i), (ii) in the above theorem, when  referring to a (global) distributional solution, we mean a function $u \in L^{p}_{\text{loc}}(((0,\infty);\Rn))$ which satisfies \eqref{inEQ:main/thm} in $\mathcal{D}'((0,\infty);\Rn)$. 

Parts (i) and (ii) above are the contents of Theorem \ref{thmnon}. Part (iii) is given in Theorem \ref{thmgl}, and Part (iv) in Remark \ref{REM:L1}.

Our proofs rely on heat kernel estimates for the fundamental solution of $\mathcal{H}=\partial_{t}-\L$ from \cite{BB23}, an idea that was  adopted also in the work \cite{Ruzhansky} on the study of the semi-linear heat equation  on a sub-Riemannian manifold $M$, when $\L$ is the sub-Laplacian in that setting.  When $M$ is a unimodular Lie group, then the critical Fujita exponent was obtained in \cite{Ruzhansky}  via the heat kernel estimates in \cite{VCSC92}. More recently, in \cite{CKR24} the authors extended the results of \cite{Ruzhansky} in the latter case by considering time-dependent non-linearities.  In \cite{FHS12}, the authors studied the heat equation with a forcing term and a condition on the volume growth of balls, assuming uniform polynomial growth. Under this condition, the heat kernel satisfies the following two-sided Gaussian-type estimate:
$$C_{1} t^{-\frac{\alpha}{\beta}} \exp\left(-\frac{\rho d^{2}(x,y)}{s}\right)\leq \Ga(0,x;s,y) \leq C_{2} t^{-\frac{\alpha}{\beta}} \exp\left(-\frac{d^{2}(x,y)}{\rho s}\right),$$
where $d(x,y)$ is the metric between points $x$ and $y$, $\rho>0, $ $\alpha,\beta>0$ and $C_{1}, C_{2}>0$. 

The main novelty in this paper is that we obtain the critical Fujita exponent $\alpha_F$ without assuming an underlying group structure on $\Rn$ which is matching the vector fields, and we do not have a lower estimate of the heat kernel as in the previous formula. However, provided that this structure exists, our case includes the case of a sub-Laplacian on a stratified group in the sense that under Assumption \ref{as1} on $X$, which is roughly speaking the H\"ormander rank condition at $0$ together with a homogeneity assumption, and if ${\rm rank}({\rm Lie}\{X\}(x))=n$, then $\L$ is the sub-Laplacian on a stratified group. The interesting case is when the latter equality is  not satisfied, and say we  have ${\rm rank}({\rm Lie}\{X\}(x))=N$, with $N>n,$ since then the  heat kernel estimates of $\mathcal{H}=\partial_t-\L$ are obtained via the ones corresponding to the ``lifted'' (in the sense of \cite[page 3]{BB23}, see also Definition \ref{def:lifting} below) version of $\mathcal{H}$, say $\tilde{\mathcal{H}}$, of the form $\tilde{\mathcal{H}}=\partial_t-\L_\G$, where $\L_\G$  is now a sub-Laplacian on a stratified group $\G\equiv \mathbb{R}^N$, see \cite[Theorem 1.2]{BB23} or Theorem \ref{thm1.2} below. The purpose of Assumption \ref{as1} is exactly that it allows for the fundamental solution of the ``lifted'' operator $\tilde{\mathcal{H}}$ to exist. 

Let us stress that this idea of ``lifting'' from Biagi and Bonfiglioli in \cite{BB23} should  not be confused with the celebrated Rothschild-Stein lifting theorem \cite{RS}, which is more a local tool. They both include the idea of lifting the pursued analysis to that of a manifold of higher dimension, though in the case of \cite{BB23} the lifting procedure is global since it is developed with the aim of studying the fundamental solution of $\tilde{\mathcal{L}}$ on the whole space, and the formal definition is as follows:

\begin{defn}[Lifted operator]\label{def:lifting}
    Let $\HH$ be a smooth linear partial differential operator on $\mathbb{R}^{n+1}$. We say that the partial differential operator $\tilde{\HH}$ defined on $\mathbb{R}^{n+1}\times \mathbb{R}^{p}$ is a lifting of $\HH$ if $\tilde{\HH}$ has smooth coefficients in $\mathbb{R}^{n+1}\times \mathbb{R}^{p}$, and for every smooth function $f \in C^{\infty}(\mathbb{R}^{n+1})$ the operators $\tilde{\HH}$ and $\HH$ coincide on the intersection of their domains; i.e., 
    \[
    \tilde{\HH}(f \circ \pi)(z,\xi)=(\HH f)(z)\,,\quad \hbox{for all $(z,\xi) \in \mathbb{R}^{n+1}\times \mathbb{R}^p$}\,,
    \]
    where $\pi(z,\xi)=z$ is the projection of $\mathbb{R}^{n+1}\times \mathbb{R}^p$ onto its subspace $\mathbb{R}^{n+1}$.
\end{defn}

The structure of this work is as follows: in Section \ref{PR}, we give some preliminary results on the  properties of the heat kernel of homogeneous H\"ormander operators which, if lifted in the sense of \cite{BB23}, can be viewed as the sub-Laplacian of a stratified group adding to this the derivative with respect to a real variable. 
  In Section \ref{MR}, we study the global well-posedness of the heat equation with initial data \eqref{intro.prob}, and the existence of a classical solution of the differential inequality \eqref{inEQ:main/thm}.

\section{Preliminaries}\label{PR}

In this section, we clarify the assumptions of our setting, i.e., homogeneity assumptions in $\Rn$, and assumptions on the differential operator $\mathcal{H}$. We also describe the properties of the heat kernel of $\mathcal{H}$ which are given via the ones of the lifted version $\tilde{\mathcal{H}}$ of $\mathcal{H}$.
\subsection{Assumptions  on the vector fields}
Note that the subsequent analysis does not require fixing a group law on the underlying manifold $\Rn$  matching the behavior of the vector fields. However, we do assume the existence of  a family of non-isotropic dilations $\{\delta_{\lambda}\}_{\lambda>0}$ on $\Rn$; that is  a map $\delta_{\lambda}:\Rn\rightarrow \Rn$ of the form
        \begin{equation}\label{def:dilation}
            \delta_{\lambda}(x)=(\lambda^{\sigma_{1}}x_{1},\ldots,\lambda^{\sigma_{n}}x_{n}),
        \end{equation}
 where $1=\sigma_{1}\leq \ldots\leq \sigma_{n}$ are integers.  
 
 If we denote by $T(\Rn)$ the Lie algebra of the vector fields on $\Rn$, then for $U \subset T(\Rn)$ we denote by $\rm Lie\{U\}$ the set:
\[
{\rm Lie}\{U\}=\bigcap \mathcal{U}\,, \quad \hbox{where  $\mathcal{U}$ is a sub-algebra (with respect to $[\cdot,\cdot]$) of $T(\Rn)$ with $U\subset \mathcal{U}$.}
\]
Then for $x\in \Rn$ we define
\[
{\rm rank}({\rm Lie}\{U\})(x):= {\rm dim}\{XI(x), X \in {\rm Lie}\{U\}\}\,,
\]
where $I$ denotes the identity map on $\Rn$.

Recall that the differential operator $X \in T(\Rn)$ is called homogeneous of degree $\ell \in \mathbb{R}$, if for every $\phi \in C^{\infty}(\Rn)$, and every $\lambda>0$, we have 
\[
X (\phi \circ \delta_\lambda)=\lambda^{\ell} (X\phi) \circ \delta_\lambda\,.
\]

\begin{as}\label{as1}
    Let $X = \{X_1,\dots, X_m\}$ be a set of smooth and linearly independent \footnote{Note that the vector fields $X,Y$ can be linearly independent as vector fields on $\Rn$ even if  $XI(x),YI(x)$ are linearly dependent on $\Rn$ for some $x \in \Rn$. A simple example of such vector fields is the set $\{X=\partial_{x_1}, Y=x_1\partial_{ x_2}\}$ for which we have ${\rm rank( Lie}\{X,Y\}(x))=2$ for all $x \in \mathbb{R}^2$.} vector fields on $\Rn$ satisfying the following assumptions:
    \begin{itemize}
        \item [(H1)] There exists a family of non-isotropic dilations of the form \eqref{def:dilation}, such that $X_1,...,X_m$ are homogeneous of degree 1 with respect to $\delta_\lambda$.
        \item[(H2)] The set $X$ satisfies H\"{o}rmander's rank condition at $0$, i.e.,
\begin{equation*}
{\rm rank}({\rm Lie}\{X\}(0))=n.
\end{equation*}
    \end{itemize}
\end{as}
\begin{rem}
    Note that for vector fields satisfying condition (H1), condition (H2) implies that H\"ormander's rank condition is satisfied at any $x \in \Rn$, and 
    \[
    {\rm rank}({\rm Lie}\{X\}(0))\leq {\rm rank}({\rm Lie}\{X\}(x))\,,\,\,\text{for all}\,\,\, x \in \Rn,
    \]
    see also \cite[Remark 1.1]{BB23}
\end{rem}
\begin{ex}
   For $\gamma\in \mathbb{N}$ the set  $X=\{X_{1}=\frac{\partial}{\partial x_{1}}, X_{2}=x_{1}^{\gamma}\frac{\partial}{\partial x_{2}}\}$ satisfies Assumption \ref{as1}. Indeed, $X_1,X_2$ are linearly independent, and for $\delta_{\lambda}(x):=(\lambda x_{1},\lambda^{\gamma+1}x_{2})$ they satisfy (H1), while also ${\rm rank}({\rm Lie}\{X\}(0))=2$. The operator $\mathcal{L}=X_{1}^{2}+X^{2}_{2}$ is a special case of a family of Baouendi-Grushin operators, and cannot be regarded as the sub-Laplacian of a stratified group since $X_1$ and $X_2$ are not left invariant with respect to any group law on $\mathbb{R}^2$ even when $\gamma=1$.
\end{ex}

\subsection{Lifting of the vector fields and estimates on the associated CC-balls}

Here we present the detailed properties of the vector fields that we consider in this work, which coincide with those in \cite{BB23}. 

\begin{thm}[Theorem 1.2, \cite{BB23}]\label{thm1.2} Assume that the system of vector fields $X$ satisfies Assumption \ref{as1} and $N = \dim(\text{Lie}\{X\})$. We have
  \begin{itemize}
      \item [(1)] If $N= n$, then there exists a stratified group $\G\equiv \Rn$ with  dilations $\delta_{\lambda}$ as in (H1) such that $X=\text{Lie}(\G)$; i.e., $X$ generates the Lie algebra of $\G$.  
 \item [(2)] If $N > n$, then there exist a stratified group $\G \equiv \mathbb{R}^N$  and a system of vector fields $Z=\{Z_1, \ldots, Z_m\}$ such that $Z=\text{Lie}(\G)$; that is $Z$ generates the Lie algebra of $\G$ and $Z_i$ is a lifting, in the sense of Definition \ref{def:lifting}, of $X_i$, $i = 1,\ldots,m$.  
  \end{itemize}
\end{thm}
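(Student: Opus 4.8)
\noindent\emph{Proof strategy.} The plan is to derive everything from the structure of the Lie algebra $\mathfrak{g}:={\rm Lie}\{X\}$ generated by the $X_i$, with hypothesis (H1) doing the structural work.

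\emph{Step 1: $\mathfrak g$ is finite-dimensional, nilpotent and stratified.} Since each $X_i$ is $\delta_\lambda$-homogeneous of degree $1$ and the bracket of two homogeneous fields of degrees $a$ and $b$ is homogeneous of degree $a+b$, every iterated bracket of length $k$ of the $X_i$ is homogeneous of degree $k$. Writing $\delta_\lambda(x)=(\lambda^{\sigma_1}x_1,\dots,\lambda^{\sigma_n}x_n)$ with $1=\sigma_1\le\dots\le\sigma_n$ integers, a vector field $\sum_j p_j(x)\partial_{x_j}$ is $\delta_\lambda$-homogeneous of degree $\ell$ precisely when each $p_j$ is a $\delta_\lambda$-homogeneous polynomial of degree $\sigma_j-\ell$; hence there are no nonzero homogeneous fields of degree $>\sigma_n$, the space of homogeneous fields of any fixed degree $\ell$ is finite-dimensional, and (since $\sigma_j-\ell<\sigma_j$ when $\ell\ge1$) the coefficient $p_j$ depends only on the coordinates of weight strictly less than $\sigma_j$. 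Therefore $\mathfrak g=\bigoplus_{k=1}^{\sigma_n}\mathfrak g_k$, where $\mathfrak g_k$ is the span of the length-$k$ brackets; $\mathfrak g$ is nilpotent of step $\le\sigma_n$ with $N=\dim\mathfrak g<\infty$; by the Jacobi identity $\mathfrak g_k=[\mathfrak g_1,\mathfrak g_{k-1}]$ with $\mathfrak g_1={\rm span}\{X_1,\dots,X_m\}$ (brackets of length $\ge2$ have degree $\ge2$); and the triangular form of the coefficients noted above shows that every element of $\mathfrak g$ is a complete vector field on $\Rn$ (its flow is polynomial in time). Thus $\mathfrak g$ is a stratified Lie algebra of complete vector fields with first layer ${\rm span}(X)$.

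\emph{Step 2: the group $\G$ and the isotropy subalgebra.} Let $\G$ be the connected, simply connected Lie group with Lie algebra $\mathfrak g$. Being nilpotent and simply connected, $\exp\colon\mathfrak g\to\G$ is a global diffeomorphism, so $\G\equiv\RN$ as a manifold, and the grading of $\mathfrak g$ induces Carnot dilations on $\G$, making it a stratified group. Put $\mathfrak h:=\{Y\in\mathfrak g:(YI)(0)=0\}$. Evaluating the degree-$\ell$ component $Y_\ell=\sum_j p_j\partial_{x_j}$ of $Y$ at $0$ gives $(Y_\ell I)(0)=\sum_{j:\sigma_j=\ell}p_j(0)\partial_{x_j}$, so the vectors $(Y_\ell I)(0)$ for distinct $\ell$ live in pairwise disjoint coordinate blocks; hence $(YI)(0)=0$ forces $(Y_\ell I)(0)=0$ for each $\ell$, i.e.\ $\mathfrak h$ is a \emph{graded} subalgebra. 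Its codimension equals $\dim\{ZI(0):Z\in\mathfrak g\}={\rm rank}({\rm Lie}\{X\}(0))=n$ by (H2). Let $H:=\exp\mathfrak h$, a closed connected subgroup of $\G$ invariant under the Carnot dilations (because $\mathfrak h$ is graded).

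\emph{Step 3: realization on $\Rn$ and conclusion.} Since $\mathfrak g$ consists of complete fields (Step 1), it integrates to a smooth left action of $\G$ on $\Rn$ (global integrability of Lie algebra actions, cf.\ Palais), whose fundamental vector fields are the $X_i$. The orbit map $g\mapsto g\cdot0$ has differential $Y\mapsto(YI)(0)$ at $e$, which is onto by (H2), hence onto at every point of the orbit by equivariance; so the orbit of $0$ is open, its complement is a union of orbits hence open, and $\Rn$ connected forces the orbit to be all of $\Rn$. The isotropy at $0$ is exactly $H$, giving a $\G$-equivariant diffeomorphism $\Rn\cong\G/H$; as the $X_i$ are $\delta_\lambda$-homogeneous of degree $1$, the flows of the $X_i$ satisfy the same conjugation law under $\delta_\lambda$ on $\Rn$ as under the descended Carnot dilations on $\G/H$, and both fix the base point, so these two one-parameter families of dilations coincide under the identification. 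If $N=n$ then $\mathfrak h=0$, $H=\{e\}$, and $\Rn\equiv\G$ is itself a stratified group with dilations $\delta_\lambda$ whose Lie algebra is generated by the $X_i$; this is (1). If $N>n$, realize $\G\equiv\RN$ with its left-invariant fields $Z_i:=\hat X_i$, so $Z={\rm Lie}(\G)$ by Step 1; choosing exponential coordinates of the second kind on $\G$ adapted to a graded splitting $\mathfrak g=\mathfrak m\oplus\mathfrak h$, the canonical projection $\pi\colon\G\to\G/H\cong\Rn$ becomes the coordinate projection $\RN=\Rn\times\Rp\to\Rn$, $p=N-n$; and since $\hat X_i$ is $\pi$-related to its induced fundamental field $X_i$, one has $Z_i(f\circ\pi)=(X_if)\circ\pi$, i.e.\ $Z_i$ is a lifting of $X_i$ in the sense of Definition~\ref{def:lifting}. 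This is (2).

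\emph{Main obstacle.} Steps 1 and 2 are essentially bookkeeping once one sees that homogeneity annihilates brackets of step $>\sigma_n$ and makes all coefficients triangular polynomials. The real work is Step 3: passing from the abstract group $\G$ to a realization matching the \emph{given} manifold $\Rn$ with its \emph{given} dilations $\delta_\lambda$ — concretely, proving that $\mathfrak g$ integrates to a \emph{global} $\G$-action with orbit all of $\Rn$ and isotropy exactly $\exp\mathfrak h$, and that the descended Carnot dilations are precisely $\delta_\lambda$. In case (2) there is the additional step of producing global coordinates on $\G$ turning $\pi$ into a plain coordinate projection, which is where the gradedness of $\mathfrak h$ and the explicit nilpotent exponential coordinates enter.
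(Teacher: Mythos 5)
The paper does not prove this statement at all: it is quoted verbatim as Theorem 1.2 of \cite{BB23} and used as a black box, so there is no internal proof to compare against; what follows measures your sketch against the argument in the cited literature. Your Step 1 is exactly the standard first half of that argument (homogeneity of degree $1$ forces iterated brackets of length $k$ to be homogeneous of degree $k$, hence to vanish for $k>\sigma_n$, with polynomial, triangularly structured coefficients; so ${\rm Lie}\{X\}$ is a finite-dimensional, stratified, nilpotent algebra of complete vector fields). Where you diverge is the second half: \cite{BB23} builds the group concretely, putting a Campbell--Hausdorff group law on $\mathbb{R}^N$ in adapted exponential coordinates and exhibiting the lifted fields and the coordinate projection explicitly, whereas you pass through the abstract simply connected group, Palais' integration theorem, and the homogeneous space $\G/H$ with $H=\exp\mathfrak h$ the isotropy. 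That route is viable and arguably cleaner conceptually, but two points in Step 3 need repair. First, your transitivity argument ("its complement is a union of orbits hence open") silently assumes every orbit is open, i.e.\ that ${\rm rank}({\rm Lie}\{X\}(x))=n$ at \emph{every} $x$, not just at $0$; this is true, but only because homogeneity propagates the rank condition from $0$ (as in Remark 2.2 of the paper: $\delta_\lambda(x)\to 0$ as $\lambda\to 0^+$ and the full-rank set is open and dilation-invariant), and you must say so. Second, there is a left/right mix-up: a \emph{left}-invariant field on $\G$ is in general not $\pi$-related to any field on $\G/H$ under $\pi(g)=gH$; the fields that descend to the fundamental vector fields of the left $\G$-action are the \emph{right}-invariant ones. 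So either take $Z_i$ to be the right-invariant fields associated with $X_i$ (and check they are still generators of a stratified structure, which they are, being the image of the first layer under the canonical anti-isomorphism), or work with the opposite group/right coset space. With those two corrections, and with the brief verification that the map $\mathfrak m\ni A\mapsto\exp(A)H$ is a global diffeomorphism (true for closed connected subgroups of simply connected nilpotent groups, which is what turns $\pi$ into a plain coordinate projection), your outline gives a legitimate alternative proof of both parts, including the identification of the descended Carnot dilations with the given $\delta_\lambda$, which you argue correctly via $(\delta_\lambda)_*Y=\lambda^kY$ for $Y$ homogeneous of degree $k$ and $\delta_\lambda(0)=0$.
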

\begin{rem}
    Theorem \ref{thm1.2} implies in particular that if  $dim(\text{Lie}\{X\})=N= n$, then the operator $\L$  is exactly the sub-Laplacian on $\G\equiv \Rn$, while if $dim(\text{Lie}\{X\})=N > n$, then $\L$ can be lifted to a sub-Laplacian $\L_{\G}:=\sum\limits_{j=1}^{m}Z_j^2$ of $\G\equiv \mathbb{R}^N$.
\end{rem}

We recall, see \cite[Section 2]{BB23}, that given a system $X$ on $\Rn$ satisfying Assumption \ref{as1} with ${\rm rank}({\rm Lie}\{X\}(x))=N>n$ there exists a stratified group $\G =(\mathbb{R}^{N} , *, \delta_{\lambda})$ and $\mathbb{R}^N$ can be decomposed as  $\mathbb{R}^{N} = \Rn_{x} \times \mathbb{R}^{p}_{\xi}$ so that the first $n$ variables denoted by $x$ are the ``old'' ones and the extra $p$ variables denoted by $\xi$ are the ``new'' ones. Moreover
\begin{enumerate}[label=\roman*)]
      \item $\G$ is a homogeneous group with $m$ generators: the $m$ vector fields $Z_1,\ldots,Z_m$ that are lifting the $m$ original ones in $X$ generate it; i.e. ${\rm Lie}(\G)$ is spanned (as a vector space) by all iterated brackets of elements of $X$;
    \item $\G$ is a nilpotent group of step $\sigma_n$, where $\sigma_n$ is the one appearing in \eqref{def:dilation};
    \item the induced dilations on $\G$ are of the form 
    \[
     D_{\lambda}(x,\xi)=(\lambda^{\sigma_{1}}x_{1},\ldots,\lambda^{\sigma_{n}}x_{n},\lambda^{\sigma^{*}_{1}}\xi_{1},\ldots,\lambda^{\sigma^{*}_{p}}\xi_{p}),\,\,\,(x,\xi)\in \RN=\Rn_x\times\mathbb{R}^{p}_{\xi};
    \]
    \item we have three homogeneous dimensions that naturally arise
    \begin{equation}\label{homdim}
q:=\sum_{j=1}^{n}\sigma_{j},\,\,q^{*}:=\sum_{j=1}^{p}\sigma^{*}_{j},\,\,\,Q=q+q^{*},
\end{equation}
corresponding to $(\Rn,\delta_{\lambda})$, $(\Rp,\delta^{*}_{\lambda})$ and $(\RN,D_{\lambda})$, respectively.
\end{enumerate}

The system $X$ gives rise to the notion of the Carnot-Carath\'{e}odory distance, or in short, CC-distance.
\begin{defn}
    Assume that $X=\{X_{1},\ldots,X_{m}\}$ is a family of smooth vector fields on $\Rn$ satisfying H\"{o}rmander's rank condition at every point of $\Rn$. Then for any two points $x, y \in \Rn$ we can define the CC-distance $d_{X} (x, y)$ associated with $X$ as
    \begin{equation*}
        d_{X}(x,y)=\inf\{l(\gamma): \gamma \in \mathcal{S}(X)\,, \gamma(0)=x,\gamma(1)=y\},
    \end{equation*}
    where $\mathcal{S}(X)$ is the set of the absolutely continuous curves $\gamma : [0, 1] \rightarrow \Rn$ satisfying
    \begin{equation*}
        \gamma'(t)=\sum_{j=1}^{m}a_{j}(t)X_{j}(\gamma(t)),\,\,\,\text{with} \,\,|a_{j}(t)|\leq l(\gamma)\,\, \text{for all} \,\,j=1,\ldots,m\,,
    \end{equation*}
    where $l(\gamma)$ is the length of $\gamma$. 
\end{defn}
We denote by $B_X(x,\rho)$  the ball of radius $\rho$ centered at $x\in \Rn$ with respect to the CC-distance, i.e. 
\begin{equation*}
    B_{X}(x,\rho):=\{y\in\Rn:d_{X}(x,y)< \rho\}.
\end{equation*}
Thanks to assumption (H1), $d_{X}$ and $B_{X}$ possess homogeneity properties:
\begin{enumerate}\label{propcc}
    \item $d_{X}(\delta_{\lambda}(x),\delta_{\lambda}(y))=\lambda d_{X}(x,y);$
    \item $y\in B_{X}(x,r) \Longleftrightarrow \delta_{\lambda}(y)\in B_{X}(\delta_{\lambda}(x),\lambda r)$;
    \item $|B_{X}(\delta_{\lambda}(x),\lambda r)|=\lambda^{q} |B_{X}(x, r)|.$
\end{enumerate}
From \cite[Theorem B]{BBB}, we have  two-sided estimates for the volume of $B_{X}(x,\rho)$ :
\begin{enumerate}[label=\roman*)]
    \item  For $x \in \Rn$, $\rho>r>0$, and for $q$ as in \eqref{homdim}, we have
    \begin{equation}\label{prop.ball}
        \gamma_{1}\left(\frac{\rho}{r}\right)^{n}\leq\frac{|B_{X}(x,\rho)|}{|B_{X}(x,r)|}\leq\gamma_{2}\left(\frac{\rho}{r}\right)^{q}\,,
  \end{equation}
  
   for some $\gamma_1,\gamma_2>0$, see also [\cite{BB21}, Remark 3.9].
    \item For $x \in \Rn$, $r>0$, and for $q$ as in \eqref{homdim}, we have
    \begin{equation}\label{ocencc1}
      Cr^{q} \leq |B_{X}(x,r)|\,,
   \end{equation}
   for some  $C>0$.
   
\end{enumerate}
\subsection{Properties of the heat kernel of \texorpdfstring{$\mathcal{H}$}{H}}\label{subsec:heatk}

Recall that by $\mathcal{H}=\partial_{t}-\L$ with $\L=\sum\limits_{j=1}^{m}X_{j}^{2}$, where $X_j$ are smooth vector fields in $X$ which satisfy Assumption \ref{as1}, we denote the heat operator on $\mathbb{R}^{n+1}=\Rn{_x} \times \mathbb{R}_t$.  Biagi-Bonfiglioli in \cite{BB23} and Biagi-Bonfiglioli-Bramanti in \cite{BBB} studied, among other things, the properties of the heat kernel $\Ga$ of $\mathcal{H}$ in terms of the one of the lifted (in the $n$ variables) operator $\partial_t-\L_{\G}$, where $\L_\G=\sum\limits_{j=1}^{m}Z_j$ is the lifted operator as in Theorem \ref{thm1.2}. Among these properties, those that are useful to our analysis here are summarised below:

For $x,y\in \Rn$ and for every $s>0$, there exists $\rho>1$ such that
\begin{equation}\label{est2}
           \frac{1}{\rho|B_{X}(x,\sqrt{s})|} \exp\left(-\frac{\rho d^{2}_{X}(x,y)}{s}\right)\leq\Ga(0,x;s,y)
       \leq \frac{\rho}{|B_{X}(x,\sqrt{s})|} \exp\left(-\frac{ d^{2}_{X}(x,y)}{\rho s}\right)\,,
    \end{equation}
see \cite[Theorem 2.4, (i)]{BBB}. 

Moreover, by \cite[Theorem 1.4]{BB23}, we have
\begin{enumerate}[label=\roman*)]
    \item $\Ga\geq0$ and we have
\begin{equation}\label{EQ:s<t}
   \Ga(t,x;s,y)=0,\,\,\,\,s\leq t; 
\end{equation}
\item $\Ga$ is symmetric with respect to the space variables, i.e.
\begin{equation}\label{sym2}
\Ga(t,x;s,y) = \Ga(t,y;s,x);    
\end{equation}
\item for  fixed $(t,x) \in \mathbb{R}^{n+1},$ we have 
\begin{equation}\label{d}
    \int_{\Rn}\Ga(t,x;s,y)dy = 1, \,\,\,\text{whenever}\,\,\, s > t;
\end{equation}
\item for $x, y \in \Rn$ and $s, t > 0$, we have the reproduction formula
\begin{equation}\label{f}
\Ga(0,y;t+s,x)=\int_{\Rn}\Ga(0,w;t,x)\Ga(0,y;s,w)dw;
\end{equation}
\end{enumerate}
Finally, see \cite[Theorem 1.4, (ix)]{BB23}, if $\varphi\in C(\Rn)$ is bounded, then 
\begin{equation}
    u(t,x):=\int_{\Rn}\Ga(0,y;t,x)\varphi(y)dy,\,\,\, (t,x) \in \mathbb{R}_{+}\times\Rn,
\end{equation}
is the (unique) bounded classical solution of the homogeneous Cauchy problem 
\begin{equation*}
    \begin{cases}
        \mathcal{H}u(t,x)=\partial_t u(t,x)-\L u(t,x)=0,\,\,\,(t,x) \in (0,+\infty)\times \mathbb{R}^{n},\\
        u(0,x)=\varphi(x),\,\,\,x\in\Rn.
    \end{cases}
\end{equation*}
It follows that the function satisfying 
\[
         v(t,x)= \int_{\Rn}\Gamma(0,y;t,x)v_{0}(y)dy+\int_{0}^{t}\int_{\Rn}\Gamma(0,y;t-\tau,x)f(v(\tau,y))dyd\tau,
\]
is the unique solution of the differential equality 
\begin{equation*}
    \begin{cases}
       \mathcal{H}v=v_t-\L v = f(v),\,\,\,(t,x) \in (0,+\infty)\times \mathbb{R}^{n},\\
        v(0,x)=v_0(x),\,\,\,x\in\Rn.
    \end{cases}
\end{equation*}

\section{Constant coefficients case}\label{MR}
In this section, we show the existence and non-existence results for the Cauchy problem \eqref{intro.prob} where the differential operator arises as the sum of squares of vector fields satisfying Assumption \ref{as1}. 

Firstly, we will show $L^{p}(\mathbb{R}^{n})$--$L^{q}(\mathbb{R}^{n})$ decay of the heat semigroup.
\begin{thm}\label{THM:LpnormGamma}
    Assume that $1\leq p\leq r\leq +\infty$ and $u_{0}\in L^{p}(\mathbb{R}^{n})$. Then, we have
    \begin{equation}\label{LpLq}
        \left\|\int_{\Rn}\Gamma(0,y;t,\cdot)u_{0}(y)dy\right\|_{L^{r}(\Rn)}\leq Ct^{-\frac{q}{2}\left(\frac{1}{p}-\frac{1}{r}\right)}\|u_{0}\|_{L^{p}(\Rn)}\,,
    \end{equation}
    for all $t>0$.
\end{thm}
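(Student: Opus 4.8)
The strategy is the classical Nash–type interpolation argument, adapted to the sub-Riemannian heat kernel $\Gamma$ via the two-sided Gaussian bound \eqref{est2} and the volume lower bound \eqref{ocencc1}. First I would treat the two endpoint cases. For $p=r$ the estimate reduces to the $L^p$-contractivity of the semigroup, which follows from $\Gamma\geq 0$ together with the mass normalization \eqref{d}: writing $u(t,x)=\int_{\Rn}\Gamma(0,y;t,x)u_0(y)\,dy$, Minkowski's integral inequality and $\int_{\Rn}\Gamma(0,y;t,x)\,dx=1$ (using symmetry \eqref{sym2}) give $\|u(t,\cdot)\|_{L^p(\Rn)}\leq\|u_0\|_{L^p(\Rn)}$, which is \eqref{LpLq} with the harmless convention $t^{0}=1$. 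For the other endpoint, $r=+\infty$ with general $p\geq 1$, I would estimate pointwise
\[
|u(t,x)|\leq \sup_{y\in\Rn}\Gamma(0,y;t,x)^{1/p'}\left(\int_{\Rn}\Gamma(0,y;t,x)^{}\,|u_0(y)|^{p}\,dy\right)^{1/p}\cdot(\text{const}),
\]
more cleanly via Hölder: $|u(t,x)|\le \bigl(\int \Gamma\,|u_0|^p\bigr)^{1/p}\bigl(\int\Gamma\bigr)^{1/p'}$ is not quite enough, so instead bound $\Gamma(0,y;t,x)\leq \rho\,|B_X(x,\sqrt t)|^{-1}\leq \rho C^{-1} t^{-q/2}$ using the upper bound in \eqref{est2} (the exponential factor is $\leq 1$) and \eqref{ocencc1}, and then $\|u(t,\cdot)\|_{L^\infty}\leq \|\Gamma(0,\cdot;t,x)\|_{L^{p'}}\|u_0\|_{L^p}$, where $\|\Gamma(0,\cdot;t,x)\|_{L^{p'}}^{p'}\leq (\rho C^{-1}t^{-q/2})^{p'-1}\int\Gamma\,dy=(\rho C^{-1})^{p'-1}t^{-q(p'-1)/2}$, giving the bound $t^{-\frac q2\cdot\frac1{p'}}=t^{-\frac q2(1-\frac1p)}$ with $r=\infty$.

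With the two endpoints $\|T_t\|_{L^p\to L^p}\lesssim 1$ and $\|T_t\|_{L^p\to L^\infty}\lesssim t^{-\frac q2(1-\frac1p)}$ in hand (here $T_t u_0:=u(t,\cdot)$), the intermediate range $p\le r\le\infty$ follows by the Riesz–Thorin interpolation theorem: writing $\frac1r=\frac{1-\theta}{p}+\frac{\theta}{\infty}$, i.e. $\theta=1-\frac pr$, one gets
\[
\|T_t\|_{L^p\to L^r}\le \|T_t\|_{L^p\to L^p}^{1-\theta}\,\|T_t\|_{L^p\to L^\infty}^{\theta}\lesssim \bigl(t^{-\frac q2(1-\frac1p)}\bigr)^{1-\frac pr}=t^{-\frac q2\left(\frac1p-\frac1r\right)},
\]
which is exactly \eqref{LpLq}. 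Alternatively, and perhaps more transparently here, one can avoid Riesz–Thorin by directly estimating $\|T_t u_0\|_{L^r}$: bound $\Gamma(0,y;t,x)\le (\rho C^{-1} t^{-q/2})^{1-p/r}\,\Gamma(0,y;t,x)^{p/r}$ pointwise (using $\Gamma\le\rho C^{-1}t^{-q/2}$), so that Hölder with exponents $r/p$ and $(r/p)'$ applied to $\int\Gamma|u_0|\,dy=\int \Gamma^{1/r'}\cdot\Gamma^{1/r}|u_0|\,dy$ — after first splitting $|u_0|=|u_0|^{p/r}\cdot|u_0|^{1-p/r}$ — produces the $L^p\to L^r$ bound after taking $L^r$ norms in $x$ and using $\int_{\Rn}\Gamma(0,y;t,x)\,dx=1$ via Tonelli.

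The only genuine subtlety — and the step I would be most careful about — is the interchange of the spatial $x$-integration (in the $L^r$ norm) with the $y$-integration, which requires Tonelli/Fubini and the symmetry property \eqref{sym2} so that $\int_{\Rn}\Gamma(0,y;t,x)\,dx=\int_{\Rn}\Gamma(0,x;t,y)\,dx=1$ by \eqref{d}; everything else (positivity of $\Gamma$, the $t^{-q/2}$ sup-bound from \eqref{est2} and \eqref{ocencc1}, Hölder, Minkowski) is routine. Note the exponent $q/2$ — not $Q/2$ — because the relevant volume bound is $|B_X(x,\sqrt t)|\gtrsim t^{q/2}$ from \eqref{ocencc1} on $\Rn$ itself, even though the kernel is obtained by lifting; this is precisely why the base homogeneous dimension $q$, rather than $Q$, governs the decay and ultimately the Fujita exponent $\alpha_F=1+\tfrac2q$.
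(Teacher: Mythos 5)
Your overall strategy is exactly the paper's: an $L^{p}\to L^{p}$ contraction from Minkowski's integral inequality and the mass normalization \eqref{d}, a smoothing endpoint from the kernel upper bound in \eqref{est2} together with the volume lower bound \eqref{ocencc1}, and Riesz--Thorin to fill in the intermediate range (the paper uses the pair $L^{1}\to L^{\infty}$ and $L^{p}\to L^{p}$, you use $L^{p}\to L^{\infty}$ and $L^{p}\to L^{p}$; both choices work, and your remark that the decay rate is governed by $q$ rather than $Q$ is correct and on point).

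There is, however, an exponent slip in your $L^{p}\to L^{\infty}$ endpoint. From $\int_{\Rn}\Gamma^{p'}(0,y;t,x)\,dy\le\bigl(\sup_{y}\Gamma\bigr)^{p'-1}\int_{\Rn}\Gamma\,dy\lesssim t^{-\frac{q}{2}(p'-1)}$ you should get
\[
\bigl\|\Gamma(0,\cdot;t,x)\bigr\|_{L^{p'}(\Rn)}\lesssim t^{-\frac{q}{2}\cdot\frac{p'-1}{p'}}=t^{-\frac{q}{2p}},
\]
since $\frac{p'-1}{p'}=\frac{1}{p}$, whereas you wrote $t^{-\frac{q}{2}\cdot\frac{1}{p'}}=t^{-\frac{q}{2}(1-\frac{1}{p})}$. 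This is not cosmetic: with your exponent the final displayed identity $\bigl(t^{-\frac{q}{2}(1-\frac{1}{p})}\bigr)^{1-\frac{p}{r}}=t^{-\frac{q}{2}(\frac{1}{p}-\frac{1}{r})}$ is false unless $p=2$. With the corrected endpoint $\|T_{t}\|_{L^{p}\to L^{\infty}}\lesssim t^{-\frac{q}{2p}}$ and $\theta=1-\frac{p}{r}$, Riesz--Thorin gives $\bigl(t^{-\frac{q}{2p}}\bigr)^{1-\frac{p}{r}}=t^{-\frac{q}{2}(\frac{1}{p}-\frac{1}{r})}$, which is \eqref{LpLq}. So the argument is sound once this arithmetic is repaired; everything else, including the Tonelli/symmetry interchange you flag, is fine.
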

\begin{proof}
    Using the upper estimate of the heat kernel and \eqref{ocencc1}, we have
    $$\left\|\int_{\Rn}\Gamma(0,y;t,\cdot)u_{0}(y)dy\right\|_{L^{\infty}(\Rn)}\leq Ct^{-\frac{q}{2}}\|u_{0}\|_{L^{1}(\Rn)},$$
and using the integral Minkowski inequality and \eqref{d}, we get 
$$\left\|\int_{\Rn}\Gamma(0,y;t,\cdot)u_{0}(y)dy\right\|_{L^{p}(\Rn)}\leq \|u_{0}\|_{L^{p}(\Rn)}.$$

Finally, using the Riesz-Thorin interpolation theorem, we obtain \eqref{LpLq}.
\end{proof}
The existence result for the solution to the Cauchy problem \eqref{intro.prob} essentially  relies on the assumption on the  initial data $u_0$ demonstrated in the next theorem. 
\begin{thm}\label{thmgl}
    Let $X=\{X_{1},\ldots,X_{m}\}$ be a system of vector fields on $\Rn$ that satisfy Assumption \ref{as1}. Suppose that $f$ is a nonnegative, continuous, and increasing function such that $f(u)\leq Au^{\alpha}$ with $A>0$ and $\alpha\in(1,\infty)$, for all $u \geq 0$. If $0\leq u_{0}\in L^{\gamma}(\Rn)$, for some $\gamma\in[1,\infty)$, and 
    \begin{equation}\label{congl}
\int_{0}^{\infty}\left\|\int_{\Rn}\Gamma(0,y;t,\cdot)u_{0}(y)dy\right\|^{\alpha-1}_{L^{\infty}(\Rn)}dt<\frac{1}{A(\alpha-1)},
    \end{equation}
    then there exists a non-negative curve $u:[0,\infty)\rightarrow L^{\gamma}(\Rn)$ which is a global solution of \eqref{intro.prob}. 
    Additionally, there exists $C>1$ such that
    
    \begin{equation}\label{thm.3.1.conc}
        \int_{\Rn}\Gamma(0,y;t,x)u_{0}(y)dy\leq u(t,x)\leq C\int_{\Rn}\Gamma(0,y;t,x)u_{0}(y)dy,
    \end{equation}
    for a.e. $(t,x)\in \mathbb{R}_{+}\times \Rn$.
\end{thm}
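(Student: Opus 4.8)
The plan is to construct the global solution of \eqref{intro.prob} by a monotone iteration scheme (Picard-type successive approximations) built on the heat kernel $\Gamma$, using the decay estimate \eqref{LpLq} from Theorem \ref{THM:LpnormGamma} together with the smallness hypothesis \eqref{congl} to obtain a uniform bound. Concretely, set $u_1(t,x):=S(t)u_0(x):=\int_{\Rn}\Gamma(0,y;t,x)u_0(y)\,dy$, and define inductively
\[
u_{k+1}(t,x):=S(t)u_0(x)+\int_0^t\!\!\int_{\Rn}\Gamma(0,y;t-\tau,x)\,f(u_k(\tau,y))\,dy\,d\tau .
\]
Since $\Gamma\geq 0$, $u_0\geq 0$, and $f\geq 0$, every $u_k$ is nonnegative; since $f$ is increasing, one shows by induction that $u_1\leq u_2\leq\cdots$, so the sequence is pointwise monotone increasing. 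The core of the argument is the a priori bound: I would prove by induction that $u_k(t,x)\leq C\,S(t)u_0(x)$ for all $k$, for a suitable constant $C>1$ depending on $A$, $\alpha$, and the integral in \eqref{congl}. This is the step I expect to be the main obstacle, and it is where \eqref{congl} enters decisively.

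For that bound, suppose $u_k\leq C\,S(t)u_0$. Using $f(u)\leq Au^\alpha$, the monotonicity, and the semigroup/reproduction property \eqref{f} (which gives $\int_{\Rn}\Gamma(0,y;t-\tau,x)\,S(\tau)u_0(y)\,dy=S(t)u_0(x)$ by \eqref{d} and Fubini), I estimate
\[
\int_0^t\!\!\int_{\Rn}\Gamma(0,y;t-\tau,x)\,f(u_k(\tau,y))\,dy\,d\tau
\leq A\,C^\alpha\int_0^t\!\!\int_{\Rn}\Gamma(0,y;t-\tau,x)\big(S(\tau)u_0(y)\big)^\alpha dy\,d\tau .
\]
Now I bound $(S(\tau)u_0(y))^\alpha=(S(\tau)u_0(y))^{\alpha-1}\,S(\tau)u_0(y)\leq \|S(\tau)u_0\|_{L^\infty(\Rn)}^{\alpha-1}\,S(\tau)u_0(y)$, pull the $L^\infty$ factor out of the $y$-integral, and apply the reproduction identity to the remaining $\int_{\Rn}\Gamma(0,y;t-\tau,x)S(\tau)u_0(y)\,dy=S(t)u_0(x)$. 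This yields
\[
u_{k+1}(t,x)\leq S(t)u_0(x)+A\,C^\alpha\,S(t)u_0(x)\int_0^{\infty}\|S(\tau)u_0\|_{L^\infty(\Rn)}^{\alpha-1}\,d\tau
= S(t)u_0(x)\Big(1+A\,C^\alpha\,\theta\Big),
\]
where $\theta:=\int_0^\infty\|S(\tau)u_0\|_{L^\infty}^{\alpha-1}d\tau<\frac{1}{A(\alpha-1)}$ by \eqref{congl}. So the induction closes provided $1+A\,C^\alpha\theta\leq C$; since $A\theta<\frac{1}{\alpha-1}$, a standard fixed-point/convexity argument (the map $C\mapsto 1+A\theta C^\alpha$ has a fixed point $C>1$ precisely when $A\theta(\alpha-1)<1$, choosing $C$ to be the smaller root) provides such a $C$. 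I should note $\theta<\infty$ is not automatic from $u_0\in L^\gamma$ alone — it is exactly the content of \eqref{congl}, and \eqref{LpLq} guarantees $\|S(\tau)u_0\|_{L^\infty}\leq C\tau^{-q/(2\gamma)}\|u_0\|_{L^\gamma}$, which makes the integrand integrable near $0$ when $\gamma$ is large and near $\infty$ when $\alpha$ is supercritical, i.e. is consistent with $\alpha>\alpha_F$.

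With the uniform bound and monotonicity in hand, the monotone convergence theorem gives a pointwise limit $u(t,x):=\lim_k u_k(t,x)$ satisfying $S(t)u_0(x)\leq u(t,x)\leq C\,S(t)u_0(x)$ a.e., which is precisely \eqref{thm.3.1.conc}; in particular $u(t,\cdot)\in L^\gamma(\Rn)$ for every $t$ since $S(t)u_0\in L^\gamma(\Rn)$ by the $p=r=\gamma$ case of \eqref{LpLq}. Passing to the limit in the iteration — justified by monotone (or dominated) convergence using the bound $f(u_k)\leq AC^\alpha(S(\tau)u_0)^\alpha$ and continuity of $f$ — shows that $u$ satisfies the Duhamel integral equation
\[
u(t,x)=\int_{\Rn}\Gamma(0,y;t,x)u_0(y)\,dy+\int_0^t\!\!\int_{\Rn}\Gamma(0,y;t-\tau,x)\,f(u(\tau,y))\,dy\,d\tau ,
\]
which by the mild-solution discussion at the end of Section \ref{PR} is the (unique) solution of \eqref{intro.prob}; local smoothness of $u$ in $(0,\infty)\times\Rn$ follows from parabolic regularity for $\mathcal H$ since the right-hand side $f(u)$ is locally bounded (being dominated by $C\,S(t)u_0$, which is bounded on compact subsets of $(0,\infty)\times\Rn$) and continuous. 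Finally, continuity of the curve $t\mapsto u(t,\cdot)\in L^\gamma(\Rn)$ follows from the sandwich \eqref{thm.3.1.conc} together with strong continuity of $t\mapsto S(t)u_0$ in $L^\gamma$ and dominated convergence applied to the Duhamel term. The delicate point throughout, and the one I would be most careful about, is the interchange of the $\alpha$-th power with the $L^\infty$-norm extraction and the use of the reproduction formula \eqref{f}; everything else is routine once the constant $C$ is correctly chosen from \eqref{congl}.
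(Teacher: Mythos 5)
Your overall strategy (monotone Duhamel iteration, comparison with the free evolution $S(t)u_0$, passage to the limit) is the same as the paper's, but the key a priori bound has a genuine gap: the constant-in-time induction $u_k\leq C\,S(t)u_0$ does not close under the actual hypothesis \eqref{congl}. You need a $C>1$ with $1+A\theta C^{\alpha}\leq C$, and your parenthetical claim that the map $C\mapsto 1+A\theta C^{\alpha}$ has a fixed point precisely when $A\theta(\alpha-1)<1$ is false. Maximizing $(C-1)/C^{\alpha}$ over $C>1$ (the maximum is at $C=\tfrac{\alpha}{\alpha-1}$) shows a fixed point exists if and only if
\begin{equation*}
A\theta\;\leq\;\frac{(\alpha-1)^{\alpha-1}}{\alpha^{\alpha}}\;=\;\frac{1}{\alpha-1}\Bigl(\frac{\alpha-1}{\alpha}\Bigr)^{\alpha}\;<\;\frac{1}{\alpha-1},
\end{equation*}
which is a strictly stronger smallness condition than \eqref{congl}. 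For instance, with $\alpha=2$ your argument requires $A\theta\leq 1/4$, whereas \eqref{congl} only guarantees $A\theta<1$; in the range $A\theta\in[1/4,1)$ your induction step $u_{k+1}\leq S(t)u_0\,(1+A C^{\alpha}\theta)\leq C\,S(t)u_0$ cannot be closed for any constant $C$. So as written your proof establishes the theorem only under a more restrictive hypothesis than the one stated.

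The paper (following Weissler) repairs exactly this point by replacing the constant $C$ with the time-dependent majorant
\begin{equation*}
\chi(t)=\Bigl(1-A(\alpha-1)\int_{0}^{t}\bigl\|S(\tau)u_0\bigr\|_{L^{\infty}(\Rn)}^{\alpha-1}\,d\tau\Bigr)^{-\frac{1}{\alpha-1}},
\end{equation*}
which is finite for all $t$ precisely because of \eqref{congl} and satisfies the integral identity $\chi(t)=1+A\int_{0}^{t}\|S(\tau)u_0\|_{L^{\infty}}^{\alpha-1}\chi^{\alpha}(\tau)\,d\tau$. Running your same estimate with the inductive hypothesis $h_k(\tau,\cdot)\leq\chi(\tau)S(\tau)u_0$ puts $\chi^{\alpha}(\tau)$ \emph{inside} the time integral, so the Duhamel term is bounded by $\bigl(\chi(t)-1\bigr)S(t)u_0$ and the induction closes exactly, yielding \eqref{thm.3.1.conc} with $C=\sup_t\chi(t)=\bigl(1-A(\alpha-1)\theta\bigr)^{-1/(\alpha-1)}$. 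The remainder of your argument (monotonicity from $f$ increasing, dominated/monotone convergence to pass to the limit in the Duhamel equation, and continuity of the curve in $L^{\gamma}$) matches the paper and is fine once this bound is corrected.
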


\begin{rem}\label{REM:L1}
   The decay estimate \eqref{LpLq} allows us to verify the global existence condition \eqref{congl}; indeed, for $u_0 \in L^1(\mathbb{R}^n) \cap L^\infty(\mathbb{R}^n)$, if we write
\[
\int_0^\infty \left\|\int_{\Rn}\Gamma(0,y;t,\cdot)u_{0}(y)dy \right\|_{L^\infty({\Rn})}^{\alpha-1} dt = \int_0^1 + \int_1^\infty\,,
\]
by choosing $p=\infty$ if $t \in [0,1]$ and $p=1$ if $t>1$, we obtain
\[
\int_0^\infty \left\|\int_{\Rn}\Gamma(0,y;t,\cdot)u_{0}(y)dy \right\|_{L^\infty({\Rn})}^{\alpha-1} dt \leq \|u_0\|_{L^\infty({\Rn})}^{\alpha-1} + C \|u_0\|_{L^1({\Rn})}^{\alpha-1} \int_1^\infty t^{-\frac{q}{2}(\alpha-1)} dt.
\]
Since for $\alpha > 1 + \frac{2}{q}$ the second integral converges, for sufficiently small $\|u_0\|_{L^1(\Rn)}$ and $\|u_0\|_{L^\infty(\Rn)}$, condition \eqref{congl} is satisfied.
\end{rem}

Also, in the following theorem we show if initial data is sufficiently small and $\alpha>1+\frac{2}{q}$, we can also obtain the global existence result, and if the data is bounded by a Gaussian, the solution also stays bounded by the Gaussian.
\begin{thm}\label{corglobex}
    Let $X=\{X_{1},\ldots,X_{m}\}$ be a system of vector fields on $\Rn$ that satisfies Assumption \ref{as1}. For $f$ and $u_0$ satisfying the assumptions of Theorem \ref{thmgl}, and if additionally $0<u_{0}(x)\leq \theta\Gamma(0,0;\varrho,x)$ for some $\theta>0$ sufficiently small, $\varrho>0$, and $\alpha>1+\frac{2}{q}$, where $q$ is defined in \eqref{homdim}, then there exists a non-negative curve $u:[0,\infty)\rightarrow L^{\gamma}(\Rn)$ for all $\gamma\in[1,\infty]$, which is a global solution of \eqref{intro.prob}. In addition,
    \begin{equation*}
         u(t,x)\leq C\theta\Gamma(0,0;t+\varrho,x),
    \end{equation*}
    where $C>0$ depends only on $X$. 
\end{thm}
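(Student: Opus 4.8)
The plan is to obtain Theorem \ref{corglobex} as a consequence of Theorem \ref{thmgl}: I will show that the hypothesis $0<u_0(x)\le\theta\,\Gamma(0,0;\varrho,x)$ forces both $u_0\in L^\gamma(\Rn)$ for every $\gamma\in[1,\infty]$ and the smallness condition \eqref{congl}, and then read off the Gaussian bound for $u$ from \eqref{thm.3.1.conc}.

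First I would record two elementary consequences of the initial bound. The upper estimate in \eqref{est2} together with the volume bound \eqref{ocencc1} gives $\Gamma(0,0;\varrho,x)\le c\,\varrho^{-q/2}$ for some $c=c(X)>0$, so $u_0\in L^\infty(\Rn)$; and \eqref{d} gives $\int_{\Rn}\Gamma(0,0;\varrho,x)\,dx=1$, so $u_0\in L^1(\Rn)$. Hence $u_0\in L^1(\Rn)\cap L^\infty(\Rn)\subset L^\gamma(\Rn)$ for every $\gamma\in[1,\infty]$, and the hypotheses of Theorem \ref{thmgl} hold for any such $\gamma$. Next, applying the reproduction formula \eqref{f} (with the integration variable in the role of $w$, with $s=\varrho$, and with the remaining spatial slot set equal to $0$) to the pointwise inequality for $u_0$ yields
\[
\int_{\Rn}\Gamma(0,y;t,x)\,u_0(y)\,dy\;\le\;\theta\int_{\Rn}\Gamma(0,y;t,x)\,\Gamma(0,0;\varrho,y)\,dy\;=\;\theta\,\Gamma(0,0;t+\varrho,x)
\]
for all $(t,x)\in\mathbb{R}_+\times\Rn$; taking the supremum over $x$ and using \eqref{est2}, \eqref{ocencc1} again gives $\big\|\int_{\Rn}\Gamma(0,y;t,\cdot)u_0(y)\,dy\big\|_{L^\infty(\Rn)}\le c\,\theta\,(t+\varrho)^{-q/2}$ for $t>0$.

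With this in hand I would verify \eqref{congl}: raising the last bound to the power $\alpha-1$ and integrating in $t$,
\[
\int_0^\infty\Big\|\int_{\Rn}\Gamma(0,y;t,\cdot)u_0(y)\,dy\Big\|_{L^\infty(\Rn)}^{\alpha-1}dt\;\le\;(c\,\theta)^{\alpha-1}\int_0^\infty(t+\varrho)^{-\frac{q(\alpha-1)}{2}}\,dt.
\]
Since $\alpha>1+\frac2q$ we have $\tfrac{q(\alpha-1)}{2}>1$, so the integral on the right converges, equal to $\frac{\varrho^{\,1-q(\alpha-1)/2}}{q(\alpha-1)/2-1}$; the right-hand side is therefore a fixed multiple of $\theta^{\alpha-1}$, which is $<\frac1{A(\alpha-1)}$ once $\theta$ is small enough (with threshold depending on $X$, $A$, $\alpha$, $\varrho$). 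Thus \eqref{congl} holds, and Theorem \ref{thmgl} provides a non-negative global solution $u:[0,\infty)\to L^\gamma(\Rn)$ of \eqref{intro.prob} satisfying \eqref{thm.3.1.conc}. Combining the upper inequality in \eqref{thm.3.1.conc} with the Gaussian bound on the linear term gives $u(t,x)\le C\,\theta\,\Gamma(0,0;t+\varrho,x)$ with $C$ as in \eqref{thm.3.1.conc}; and from this bound $\|u(t,\cdot)\|_{L^\infty(\Rn)}\le C\,c\,\theta\,\varrho^{-q/2}$ is bounded uniformly in $t\ge0$, while $\int_{\Rn}u(t,x)\,dx\le C\theta\int_{\Rn}\Gamma(0,0;t+\varrho,x)\,dx=C\theta$ by \eqref{d}; hence $u(t,\cdot)\in L^\gamma(\Rn)$ for every $\gamma\in[1,\infty]$, as claimed.

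The only step that needs genuine care is the bookkeeping with the reproduction and symmetry properties of $\Gamma$ — getting the right variables into the right slots so that $\int_{\Rn}\Gamma(0,y;t,x)\Gamma(0,0;\varrho,y)\,dy=\Gamma(0,0;t+\varrho,x)$ — together with the (easy but essential) observation that the smallness of $\|u_0\|$ required by \eqref{congl} comes for free once $\theta$ is taken small; all the remaining work is a direct application of the estimates collected in Section \ref{PR} and of Theorem \ref{thmgl}. Alternatively, one could bypass Theorem \ref{thmgl} and run the monotone Picard iteration directly, taking $u_1(t,x)=\int_{\Rn}\Gamma(0,y;t,x)u_0(y)\,dy$ and showing by induction, using $f(u)\le Au^{\alpha}$ and the same reproduction-formula computation, that every iterate is bounded above by $2\theta\,\Gamma(0,0;t+\varrho,x)$, so that the increasing limit is the desired global solution.
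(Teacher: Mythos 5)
Your proposal is correct and follows essentially the same route as the paper: verify the smallness condition \eqref{congl} by combining the pointwise bound $u_0\le\theta\,\Gamma(0,0;\varrho,\cdot)$ with the reproduction formula \eqref{f}, the upper heat kernel estimate \eqref{est2} and the volume bound \eqref{ocencc1}, use $\alpha>1+\frac{2}{q}$ for convergence and take $\theta$ small, then invoke Theorem \ref{thmgl} and read off the Gaussian bound from \eqref{thm.3.1.conc} together with \eqref{f}. Your extra bookkeeping (explicit value of the convergent integral, and the check that $u_0$ and $u(t,\cdot)$ lie in $L^{\gamma}(\Rn)$ for every $\gamma\in[1,\infty]$) is consistent with, and slightly more detailed than, the paper's argument.
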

\begin{proof}[Proof of Theorem \ref{corglobex}]
By the assumption on $u_0$ we have
 \begin{equation*}
        \begin{split}
\int_{0}^{\infty}&\left\|\int_{\Rn}\Gamma(0,y;t,\cdot)u_{0}(y)dy\right\|^{\alpha-1}_{L^{\infty}(\Rn)}dt\leq \theta^{\alpha-1}\int_{0}^{\infty}\left\|\int_{\Rn}\Gamma(0,y;t,\cdot)\Gamma(0,0;\varrho,y)dy\right\|^{\alpha-1}_{L^{\infty}(\Rn)}dt\\&
\stackrel{\eqref{f}}=\theta^{\alpha-1}\int_{0}^{\infty}\left\|\Gamma(0,0;t+\varrho,\cdot)\right\|^{\alpha-1}_{L^{\infty}(\Rn)}dt\\&
\stackrel{\eqref{est2}}\leq C\theta^{\alpha-1}\int_{0}^{\infty}\left\|\frac{\rho}{|B_{X}(\cdot,\sqrt{t+\varrho})|} \exp\left(-\frac{ d_{X}(\cdot,0)}{\rho (t+\varrho)}\right)\right\|^{\alpha-1}_{L^{\infty}(\Rn)}dt\\&
\stackrel{\eqref{ocencc1}}\leq C\theta^{\alpha-1}\rho^{\alpha-1}\int_{\varrho}^{\infty}z^{-\frac{q}{2}(\alpha-1)}dz\\&
\stackrel{\alpha>1+\frac{2}{q}}\leq \frac{1}{A(\alpha-1)},
    \end{split}
    \end{equation*}
   for some $A>0$, since $\theta$ is sufficiently small. By Theorem \ref{thmgl} there exists a global solution to \eqref{intro.prob} and we have 
     \begin{equation*}
    \begin{split}
        u(t,x)&\stackrel{\eqref{thm.3.1.conc}}\leq C\int_{\Rn}\Gamma(0,y;t,x)u_{0}(y)dy\\&
        \leq C\theta\int_{\Rn}\Gamma(0,y;t,x)\Gamma(0,0;\varrho,y)dy\\&
        \stackrel{\eqref{f}}=C\theta\Gamma(0,0;t+\varrho,x)\,,
        \end{split}
\end{equation*}
completing the proof. 
\end{proof}
The proof of Theorem \ref{thmgl} is based on a monotone sequence argument that was introduced by Weissler in \cite{Weissler}.
\begin{proof}[Proof of Theorem \ref{thmgl}]
  Note that by virtue of Fubini's theorem we interchange the order of integration in several parts of  the present proof. The symmetry of the kernel $\Ga$ with respect to the space variables, see \eqref{sym2}, has also been used here without being mentioned. 
  
By \eqref{congl} the quantity  
\begin{equation*}
    \chi(t):=\left(1-A(\alpha-1)\int_{0}^{t}\left\|\int_{\Rn}\Gamma(0,y;\tau,\cdot)u_{0}(y)dy\right\|^{\alpha-1}_{L^{\infty}(\Rn)}d\tau\right)^{-\frac{1}{\alpha-1}}\,,
\end{equation*}
is well defined.  We have $\chi(0)=1$ and $\chi'(t)=A\left\|\int_{\Rn}\Gamma(0,y;t,\cdot)u_{0}(y)dy\right\|^{\alpha-1}_{L^{\infty}(\Rn)}\chi^{\alpha}(t)$. Solving this ODE with initial conditions gives 
\begin{equation}\label{x^a}
    \chi(t)=1+A\int_{0}^{t}\left\|\int_{\Rn}\Gamma(0,y;\tau,\cdot)u_{0}(y)dy\right\|^{\alpha-1}_{L^{\infty}(\Rn)}\chi^{\alpha}(\tau)d\tau.
\end{equation}
Let $h:[0,\infty) \rightarrow L^\gamma(\Rn)$ be a continuous curve for which we have 
\begin{equation}\label{eq:ass.h}
 \int_{\Rn}\Gamma(0,y;t,x)u_0(y)dy  \leq h(t,x)\leq \chi(t)\int_{\Rn}\Gamma(0,y;t,x)u_0(y)dy\,, \qquad \mbox{for all } t>0\,.
\end{equation}
We define the operator $\mathcal{K}$ acting on a function, say $v=v(t)$ where $t \in [0,\infty)$, as follows
    \begin{equation*}
\mathcal{K}v(t):=\int_{\Rn}\Gamma(0,y;t,x)u_0(y)dy+\int_{0}^{t}\int_{\Rn}\Gamma(0,y;t-\tau,x)f(v(\tau,y))dyd\tau\,,
    \end{equation*}
    where $u_0$ is the initial data from \eqref{intro.prob}. Using that $f(u)\leq Au^{\alpha}$  we get 
    \begin{equation*}
        \begin{split}
&\mathcal{K}h(t,x)=\int_{\Rn}\Gamma(0,y;t,x)u_0(y)dy+\int_{0}^{t}\int_{\Rn}\Gamma(0,y;t-\tau,x)f(h(\tau,y))dyd\tau \\&
\leq \int_{\Rn}\Gamma(0,y;t,x)u_0(y)dy+A\int_{0}^{t}\int_{\Rn}\Gamma(0,y;t-\tau,x)h^{\alpha}(\tau,y)dyd\tau\\&
\stackrel{\eqref{eq:ass.h}}\leq \int_{\Rn}\Gamma(0,y;t,x)u_0(y)dy
+A\int_{0}^{t}\int_{\Rn}\Gamma(0,y;t-\tau,x)\chi^{\alpha}(\tau)\left[\int_{\Rn}\Gamma(0,z;\tau,y)u_{0}(z)dz\right]^{\alpha}dyd\tau\\&
\leq \int_{\Rn}\Gamma(0,y;t,x)u_{0}(y)dy+A\int_{0}^{t}\left\|\int_{\Rn}\Gamma(0,z;\tau,x)u_{0}(z)dz\right\|_{L^{\infty}(\Rn)}^{\alpha-1}\chi^{\alpha}(\tau)\\&
\times\int_{\Rn}\Gamma(0,y;t-\tau,x)\int_{\Rn}\Gamma(0,z;\tau,y)u_{0}(z)dzdyd\tau
\end{split}
    \end{equation*}
    \begin{equation}\label{globocen}
        \begin{split}
&=\int_{\Rn}\Gamma(0,y;t,x)u_{0}(y)dy+A\int_{0}^{t}\left\|\int_{\Rn}\Gamma(0,z;\tau,\cdot)u_{0}(z)dz\right\|_{L^{\infty}(\Rn)}^{\alpha-1}\chi^{\alpha}(\tau)\\&
\times\int_{\Rn}u_{0}(z)\int_{\Rn}\Gamma(0,y;t-\tau,\cdot)\Gamma(0,z;\tau,y)dydzd\tau\\&
\stackrel{\eqref{f}}=\int_{\Rn}\Gamma(0,y;t,\cdot)u_{0}(y)dy+A\int_{0}^{t}\left\|\int_{\Rn}\Gamma(0,z;\tau,\cdot)u_{0}(z)dz\right\|_{L^{\infty}(\Rn)}^{\alpha-1}\chi^{\alpha}(\tau)\\&
\times\int_{\Rn}\Gamma(0,z;t,\cdot)u_{0}(z)dzd\tau\\&
=\left(1+A\int_{0}^{t}\left\|\int_{\Rn}\Gamma(0,y;\tau,x)u_{0}(y)dy\right\|_{L^{\infty}(\Rn)}^{\alpha-1}\chi^{\alpha}(\tau)d\tau\right)\int_{\Rn}\Gamma(0,z;t,x)u_{0}(z)dz\\&
=\chi(t)\int_{\Rn}\Gamma(0,z;t,x)u_{0}(z)dz,
        \end{split}
    \end{equation}
    where for the last equality we have used \eqref{x^a}. 
    We consider the sequence  of functions $\{h_{k}\}_{k=1}^{\infty}$ in $t \in [0,\infty)$ that we define inductively as follows: 
    \begin{equation*}
        \begin{cases}
            h_{0}(t,x)=\int_{\Rn}\Gamma(0,y;t,x)u_{0}(y)dy,\\
            h_{k+1}(t,x)=\mathcal{K}h_{k}(t,x),\,\,\,\,k\geq1\,.
        \end{cases}
    \end{equation*}
   We  have 
    \begin{equation*}
       \int_{\Rn}\Gamma(0,y;t,x)u_{0}(y)dy\leq h_{0}(t,x)\leq \chi(t)\int_{\Rn}\Gamma(0,y;t,x)u_{0}(y)dy, 
    \end{equation*}
   for a.e. $t \in [0,\infty)$. Additionally, using induction arguing as in \eqref{globocen} we obtain
     \begin{equation}\label{EQ:h_k}
       \int_{\Rn}\Gamma(0,y;t,x)u_{0}(y)dy\leq h_{k}(t,x)\leq \chi(t)\int_{\Rn}\Gamma(0,y;t,x)u_{0}(y)dy,\,\,\,\, 
    \end{equation}
for all $t \in [0,\infty)$ for each $k$. It is straightforward that $h_{0}\leq h_{1}$. Also, by the monotonicity of $f$, we have that  if $h_k \leq h_{k+1}$ then $\mathcal{K}h_k \leq \mathcal{K}h_{k+1}$ for all $k\geq 0$. Thus, by induction, one obtains that $h_k \leq h_{k+1}$, and so the dominated convergence theorem yields that $h_k$ converges in $L^\gamma$ to some function, say $u=u(t)$. Hence, by \eqref{EQ:h_k} we obtain 
\[
 \int_{\Rn}\Gamma(0,y;t,x)u_{0}(y)dy\leq u(t,x)\leq \chi(t)\int_{\Rn}\Gamma(0,y;t,x)u_{0}(y)dy\,,\qquad \mbox{for all } t>0\,.
\]
We claim that $u(t)$ is the global solution of the Cauchy problem \eqref{intro.prob}. To this end, let us fix $t \in [0,\infty)$, and let $\tau \in [0,t)$. Let us consider the functions $F_{k,t}$ defined by $\tau \mapsto \int_{\Rn}\Gamma(0,y;t-\tau,\cdot)f(h_{k}(\tau))dy$ for $k \in \mathbb{N}$. We estimate with $F_{k,t}(\tau)=F_{k,t}(\tau,\cdot)$ as a function of $x$,

\begin{equation*}
\begin{split}
   &F_{k,t}(\tau,x)= \int_{\Rn}\Gamma(0,y;t-\tau,x)f(h_{k}(\tau,y)dy\stackrel{f(u)\leq Au^{\alpha}}\leq  A\int_{\Rn}\Gamma(0,y;t-\tau,x)h^{\alpha}_{k}(\tau,y)dy\\&
     \stackrel{\eqref{EQ:h_k}}\leq A\int_{\Rn}\Gamma(0,y;t-\tau,x)\chi^{\alpha}(\tau)\left[\int_{\Rn}\Gamma(0,z;\tau,y)u_{0}(z)dz\right]^{\alpha}dy\\&
     \leq A\left\|\int_{\Rn}\Gamma(0,z;\tau,\cdot)u_{0}(z)dz\right\|_{L^{\infty}(\Rn)}^{\alpha-1}\chi^{\alpha}(\tau)
     \int_{\Rn}\Gamma(0,y;t-\tau,x)\int_{\Rn}\Gamma(0,z;\tau,y)u_{0}(z)dzdy\\&
\stackrel{\eqref{f}}=A\left\|\int_{\Rn}\Gamma(0,z;\tau,x)u_{0}(z)dz\right\|_{L^{\infty}(\Rn)}^{\alpha-1}\chi^{\alpha}(\tau)\int_{\Rn}\Gamma(0,z;t,x)u_{0}(z)dz=G_t(\tau,x)\,.
\end{split}
\end{equation*}
Hence the functions $F_{k,t}$ are dominated in $L^{1}(0,t;L^{\gamma}(\Rn))$. Now using the fact that $f$ is continuous and that $G_t(\tau)$ is in $L^{\gamma}(\Rn)$ for each $\tau \in (0,t)$, by the dominated convergence  theorem we get that for every $\tau$ the sequence $F_{k,t}(\tau)$ converges  to $$\int_{\Rn}\Gamma(0,y;t-\tau,x)f(u(\tau,y))dy.$$  That is, by the dominated convergence theorem for $L^{\gamma}(\Rn)$ function we deduce that 
\[
\lim_{k \rightarrow \infty} \int_{0}^{t}\int_{\Rn}\Gamma(0,y;t-\tau,x)f(h_{k}(\tau,y))dyd\tau=\int_{0}^{t}\int_{\Rn}\Gamma(0,y;t-\tau,x)f(u(\tau,y))dyd\tau,
\]
implying also that 
\[
u(t,x)=\lim_{k \rightarrow \infty} h_{k}(t,x)=\lim_{k \rightarrow \infty} \mathcal{K}h_k(t,x)=\mathcal{K}u(t,x)\,,
\]
that is $u$ is the global solution of \eqref{intro.prob}. The continuity of the operator $u:[0,\infty) \rightarrow L^{\gamma}(\Rn)$  follows by standard arguments, completing the proof.
\end{proof}
The non-existence result as in parts (i) and (ii) of our main theorem (Theorem \ref{THM.main}) is implied by the following. 
\begin{thm}\label{thmnon}
   Let $X=\{X_{1},\ldots,X_{m}\}$ be a system of vector fields on $\Rn$ that  satisfy Assumption \ref{as1}. Suppose that $f:[0,\infty)\rightarrow \mathbb{R}$ is a locally integrable function such that $f(u)\geq Bu^{\alpha}$ for some $B>0$. Let $0< u_{0}$ be a measurable function on $\Rn$. 
   
   (i) If $1<\alpha< 1+\frac{2}{q}$ where $q$ is defined in \eqref{homdim}, then  there is no nonnegative measurable global solution $u:\mathbb{R}_{+}\times \Rn\rightarrow [0,\infty]$ to the mild solution inequality
    \begin{equation}\label{non1}
u(t,x)\geq \int_{\Rn}\Gamma(0,y;t,x)u_{0}(y)dy+\int_{0}^{t}\int_{\Rn}\Gamma(0,y;t-\tau,x)f(u(\tau,y))dyd\tau.
    \end{equation}
   (ii) If $\alpha=1+\frac{2}{q}$
   then  there is no nonnegative measurable global solution $u:\mathbb{R}_{+}\times \Rn\rightarrow [0,\infty]$ to the mild solution equation
    \begin{equation}\label{non2}
u(t,x)= \int_{\Rn}\Gamma(0,y;t,x)u_{0}(y)dy+\int_{0}^{t}\int_{\Rn}\Gamma(0,y;t-\tau,x)f(u(\tau,y))dyd\tau.
    \end{equation}
\end{thm}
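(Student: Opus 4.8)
\emph{Plan.} The plan is to pair the (super)solution with the heat kernel \emph{based at the origin}, turning the integral relations \eqref{non1}--\eqref{non2} into a scalar Bernoulli-type differential inequality whose maximal solution blows up in finite time; the geometry enters only through the exact homogeneity of the Carnot--Carath\'eodory balls at $0$. Write $e^{\tau\L}g(x)=\int_{\Rn}\Gamma(0,y;\tau,x)g(y)\,dy$. First I would record three facts. (a) By \eqref{d} and \eqref{sym2}, $\int_{\Rn}\Gamma(0,y;s,0)\,dy=1$, so $y\mapsto\Gamma(0,y;s,0)$ is a probability density and Jensen's inequality gives $e^{s\L}(v^{\alpha})(0)\ge\big(e^{s\L}v(0)\big)^{\alpha}$ for $v\ge0$, $\alpha\ge1$. (b) The reproduction formula \eqref{f} is the semigroup identity $e^{a\L}e^{b\L}=e^{(a+b)\L}$. (c) Since $\delta_\lambda(0)=0$, the scaling property $|B_X(\delta_\lambda x,\lambda r)|=\lambda^{q}|B_X(x,r)|$ gives the exact identity $|B_X(0,r)|=r^{q}|B_X(0,1)|$, so the lower bound in \eqref{est2} yields, for $T\ge1$, $a_T:=e^{T\L}u_0(0)=\int_{\Rn}\Gamma(0,y;T,0)u_0(y)\,dy\ \ge\ c_0\,T^{-q/2}$ with $c_0:=\frac{e^{-\rho}}{\rho|B_X(0,1)|}\int_{B_X(0,1)}u_0>0$ (positive because $u_0>0$).

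\emph{Part (i), $1<\alpha<1+\tfrac2q$.} Fix $T>1$ and put $G_T(\tau):=e^{(T-\tau)\L}u(\tau,\cdot)(0)=\int_{\Rn}\Gamma(0,y;T-\tau,0)u(\tau,y)\,dy$ for $0\le\tau<T$. Applying the positive functional $g\mapsto e^{(T-\tau)\L}g(0)$ to \eqref{non1} at time $\tau$, interchanging all integrals by Tonelli (everything is nonnegative, since $f(u)\ge Bu^{\alpha}\ge0$), collapsing the kernel convolutions by (b) and using (a) on the nonlinear term, one obtains the scalar inequality $G_T(\tau)\ge a_T+B\int_0^{\tau}G_T(\sigma)^{\alpha}\,d\sigma$. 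Comparison with $\dot Y=BY^{\alpha}$, $Y(0)=a_T$, then forces $G_T(\tau)\to\infty$ as $\tau\uparrow\tau^{*}:=a_T^{1-\alpha}/(B(\alpha-1))$; by (c), $\tau^{*}\le\frac{c_0^{1-\alpha}}{B(\alpha-1)}T^{\,q(\alpha-1)/2}$. Since $q(\alpha-1)/2<1$ this gives $\tau^{*}/T\to0$ as $T\to\infty$, so for $T$ large $\tau^{*}<T$, contradicting the finiteness of $G_T$ on $[0,T)$ — for a (super)solution in the assumed class $\int\Gamma(0,\cdot;T-\tau,0)u(\tau,\cdot)<\infty$ whenever $T-\tau>0$, as $\Gamma(0,\cdot;T-\tau,0)$ is bounded with Gaussian decay.

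\emph{Part (ii), $\alpha=1+\tfrac2q$: first step.} Now $q(\alpha-1)/2=1$, so the above only yields $\tau^{*}\le(\text{const})\,T$ with a constant that need not be $<1$; removing this borderline is the main difficulty, and it requires first improving the pointwise bound on $u$. From $u\ge e^{\tau\L}u_0$ and (c), $u(t,x)\ge c_0 t^{-q/2}$ for $t\ge T_0$ and $d_X(x,0)\le\sqrt t$. Inserting this into the Duhamel term of \eqref{non2}, restricting the $\tau$-integral to $[T_0,t/2]$ and the $y$-integral to $B_X(0,\sqrt\tau)$, and using \eqref{est2} together with $|B_X(0,\sqrt\tau)|=\tau^{q/2}|B_X(0,1)|$ and $|B_X(x,\sqrt{t-\tau})|\le V'(t-\tau)^{q/2}$ on that region (with $V':=|B_X(0,\sqrt2+1)|$, via the scaling property), the time integral reduces to $\int_{T_0}^{t/2}\tau^{\,q(1-\alpha)/2}\,d\tau=\int_{T_0}^{t/2}\tfrac{d\tau}{\tau}$ — a logarithm, precisely because $\alpha=\alpha_F$ — so that $u(t,x)\ge c_1 t^{-q/2}\log t$ for $t\ge T_1$, $d_X(x,0)\le\sqrt t$.

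\emph{Part (ii): conclusion.} Finally I would re-run the functional argument from a \emph{freely chosen large} time $s\ge T_1$: iterating \eqref{non2} by positivity of $e^{(t-s)\L}$ gives $u(t)\ge e^{(t-s)\L}u(s)+B\int_s^{t}e^{(t-\tau)\L}u^{\alpha}(\tau)\,d\tau$, and pairing with $\Gamma(0,\cdot;T-\tau,0)$ as in Part (i) yields $G_T(\tau)\ge\hat a_T+B\int_s^{\tau}G_T(\sigma)^{\alpha}\,d\sigma$ for $\tau\in[s,T)$, where now $\hat a_T=e^{(T-s)\L}u(s)(0)\ge c_1'(\log s)T^{-q/2}$ for $T\ge2s$ — the extra factor $\log s$ coming from the improved lower bound and the same ball estimates. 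The ODE comparison gives blow-up at $\tau^{*}=s+\hat a_T^{1-\alpha}/(B(\alpha-1))\le s+\frac{(c_1')^{1-\alpha}}{B(\alpha-1)}(\log s)^{-(\alpha-1)}T$; choosing first $s$ so large that $\frac{(c_1')^{1-\alpha}}{B(\alpha-1)}(\log s)^{-(\alpha-1)}<\tfrac13$ and then $T>3s$, we get $\tau^{*}<T$, again contradicting the finiteness of $G_T$ on $[0,T)$. I expect the critical case to be the main obstacle: one must extract the logarithmic gain from the scale-invariant time integral and then, by restarting the functional argument at a later free time $s$, turn the non-strict estimate $\tau^{*}\lesssim T$ into the strict $\tau^{*}<T$; the subcritical case and all the Fubini/semigroup manipulations are routine given the heat-kernel properties collected in Section \ref{PR}.
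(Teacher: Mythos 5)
Your argument is sound, and it splits naturally into a part that matches the paper and a part that does not. For (i), your pairing with $\Gamma(0,\cdot\,;T-\tau,0)$, Jensen via \eqref{d}, the reproduction formula \eqref{f}, and the Bernoulli comparison $G_T(\tau)\geq a_T+B\int_0^\tau G_T^\alpha$ is exactly the content of Lemma \ref{lem1} (the paper runs the same ODE argument at every base point and phrases the outcome as $t^{\frac1{\alpha-1}}\|\int_{\Rn}\Gamma(0,y;t,\cdot)u_0(y)dy\|_{L^\infty}\leq C_\alpha$), and your lower bound $a_T\gtrsim T^{-q/2}$ from \eqref{est2} and the exact homogeneity $|B_X(0,r)|=r^q|B_X(0,1)|$ is the paper's \eqref{asd1}--\eqref{asd2}; so part (i) is essentially the same proof. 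For (ii) you take a genuinely different route: the paper uses the time-shifted Lemma (Corollary \ref{cor2}) to obtain a uniform-in-time bound $\|u(\tau,\cdot)\|_{L^1(\Rn)}\leq C'$ \eqref{thm.contr}, then shows $u(s+2\rho^2,\cdot)\gtrsim\Gamma(0,0;s+1,\cdot)$ \eqref{ocen4} and $\|\Gamma^\alpha(0,0;s+1,\cdot)\|_{L^1}\gtrsim(s+1)^{-1}$ \eqref{ocen3}, so the Duhamel term forces $\|u(t+2\rho^2,\cdot)\|_{L^1}\gtrsim\log(1+t)$ \eqref{contra}, contradicting the uniform bound; you instead bootstrap the pointwise bound $u\gtrsim t^{-q/2}$ on the parabolic region to $u\gtrsim t^{-q/2}\log t$, and restart the Kaplan/ODE argument at a large time $s$, where the gained $\log s$ beats the scale-invariant threshold $\tau^*\sim T$. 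Both extract the same logarithm created by $\alpha=\alpha_F$; your version trades the paper's $L^1$-norm bookkeeping for a second application of the ODE lemma, but note that your ``restarted'' mild inequality $u(t)\geq e^{(t-s)\mathcal{L}}u(s)+B\int_s^t e^{(t-\tau)\mathcal{L}}u^\alpha(\tau)\,d\tau$ is precisely the computation carried out in the proof of Corollary \ref{cor2} (Fubini plus \eqref{f}), so it should be proved, not dismissed as ``by positivity''.

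The one genuine soft spot is your justification of the finiteness of $G_T$, which is where the contradiction lives. The claim that $\int_{\Rn}\Gamma(0,y;T-\tau,0)u(\tau,y)\,dy<\infty$ ``as $\Gamma$ is bounded with Gaussian decay'' is not valid: $u(\tau,\cdot)$ is only a nonnegative measurable (indeed $[0,\infty]$-valued) function, so this integral can perfectly well be infinite. The correct repair, and what the paper's Lemma \ref{lem1} actually does through the functions $g$ and $\varsigma$, is to obtain finiteness from the mild formulation itself: applying the kernel to \eqref{non1}/\eqref{non2} at time $\tau$ and using \eqref{f} and Jensen gives $\int_s^T G_{T,x}(\sigma)^\alpha\,d\sigma\leq B^{-1}u(T,x)$, which is finite for a.e.\ $x$ once the solution is understood (as in Lemma \ref{lem1}) to be finite almost everywhere; with $J(\tau)=\int_s^\tau G_{T,x}^\alpha$ absolutely continuous, the comparison then yields $T-s\leq \hat a_T^{1-\alpha}/(B(\alpha-1))$ and your choice of $s$ and $T>3s$ closes the argument. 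Relatedly, you should run the pairing at a.e.\ base point $x$ near the origin (the ball and Gaussian comparisons you use survive this harmlessly) rather than exactly at $x=0$, since pointwise values of $u$ are only defined up to null sets. With these adjustments your proof is complete.
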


To prove this theorem we need the following lemma.
\begin{lem}\label{lem1}
     Let $X=\{X_{1},\ldots,X_{m}\}$ be a system of vector fields on $\Rn$ that  satisfy Assumption \ref{as1}. Suppose that $f:[0,\infty)\rightarrow \mathbb{R}$  is a locally integrable function such that $f(u)\geq Bu^{\alpha}$ for some $B>0$ and $\alpha\in(1,\infty)$. Let $v(t,x)\geq0$ for every $(t,x)\in[0,T]\times \Rn$ be measurable and satisfy 
     \begin{equation}\label{lemin}
         v(t,x)\geq \int_{\Rn}\Gamma(0,y;t,x)v_{0}(y)dy+\int_{0}^{t}\int_{\Rn}\Gamma(0,y;t-\tau,x)f(v(\tau,y))dyd\tau,
     \end{equation}
   for  a.e.  $(t,x)\in[0,T]\times\Rn$. Suppose that $v(t,x)<\infty$ for a.e.  $(t,x)\in[0,T]\times\Rn$. Then, we have
    \begin{equation}\label{lemcon}
        t^{\frac{1}{\alpha-1}}\left\|\int_{\Rn}\Gamma(0,y;t,\cdot)v_{0}(y)dy\right\|_{L^{\infty}(\Rn)}\leq C_{\alpha}:=(B(\alpha-1))^{-\frac{1}{\alpha-1}},
    \end{equation}
     for a.e. $t \in [0,T]$, where $v_0(y)=v(0,y)$.
\end{lem}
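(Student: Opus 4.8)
The plan is to reduce the estimate \eqref{lemcon} to a one-dimensional ODE comparison by introducing a suitable ``backward-averaged'' auxiliary function. Write $S(s)g(x):=\int_{\Rn}\Gamma(0,y;s,x)g(y)\,dy$ for the heat propagator and $\Phi(t,x):=S(t)v_{0}(x)$ for the quantity appearing in \eqref{lemcon}. First I would fix a ``good'' pair $(t,x)$ with $t\in(0,T]$, meaning that $v(t,x)<\infty$, that \eqref{lemin} holds at $(t,x)$, and that for a.e.\ $\tau\in(0,t)$ the inequality \eqref{lemin} holds for a.e.\ $y\in\Rn$; by Fubini these properties hold for a.e.\ $(t,x)$, and \eqref{lemcon} is vacuous at points where $\Phi(t,x)=0$, so I may assume $\Phi(t,x)>0$. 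For such a fixed $(t,x)$ I introduce
\[
\psi(\tau):=S(t-\tau)\,v(\tau,\cdot)(x)=\int_{\Rn}\Gamma(0,y;t-\tau,x)\,v(\tau,y)\,dy\in[0,+\infty],\qquad \tau\in[0,t),
\]
which satisfies $\psi(0)=\Phi(t,x)$ and is measurable in $\tau$ by Tonelli.

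The core of the argument is to establish two inequalities for $\psi$. Applying \eqref{lemin} at the point $(t,x)$, using $f(v)\ge Bv^{\alpha}\ge 0$, and then Jensen's inequality in the $y$-variable against the probability measure $\Gamma(0,y;t-\tau,x)\,dy$ (it is a probability measure by \eqref{d} since $t-\tau>0$, and $s\mapsto s^{\alpha}$ is convex), I obtain
\[
v(t,x)\ \ge\ \Phi(t,x)+B\int_{0}^{t}\psi(\tau)^{\alpha}\,d\tau,
\]
so finiteness of $v(t,x)$ forces $\psi^{\alpha}\in L^{1}(0,t)$. Repeating the same manipulation but starting from \eqref{lemin} written at an intermediate time $\tau\in(0,t)$ — integrate the inequality for $v(\tau,\cdot)$ against $\Gamma(0,y;t-\tau,x)\,dy$, apply Tonelli, collapse the two iterated heat kernels via the reproduction formula \eqref{f} (using $\int_{\Rn}\Gamma(0,y;t-\tau,x)\Gamma(0,z;\tau,y)\,dy=\Gamma(0,z;t,x)$ and $\int_{\Rn}\Gamma(0,y;t-\tau,x)\Gamma(0,z;\tau-\sigma,y)\,dy=\Gamma(0,z;t-\sigma,x)$ for $0<\sigma<\tau<t$), and apply Jensen once more — yields the self-improving inequality
\[
\psi(\tau)\ \ge\ \Phi(t,x)+B\int_{0}^{\tau}\psi(\sigma)^{\alpha}\,d\sigma\qquad\text{for a.e.\ }\tau\in(0,t).
\]

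To conclude, set $\Psi(\tau):=\Phi(t,x)+B\int_{0}^{\tau}\psi(\sigma)^{\alpha}\,d\sigma$ on $[0,t]$. By the first inequality $\Psi$ is finite, absolutely continuous, non-decreasing, with $\Psi\ge\Phi(t,x)>0$, $\Psi(0)=\Phi(t,x)$ and $\Psi(t)\le v(t,x)<\infty$; by the second, $\psi\ge\Psi$ a.e.\ on $(0,t)$, hence $\Psi'=B\psi^{\alpha}\ge B\Psi^{\alpha}$ a.e. Since $\Psi$ is bounded away from $0$, $\Psi^{1-\alpha}$ is absolutely continuous and
\[
\frac{d}{d\tau}\,\Psi(\tau)^{1-\alpha}=(1-\alpha)\,\Psi(\tau)^{-\alpha}\,\Psi'(\tau)\ \le\ -(\alpha-1)B\qquad\text{a.e.},
\]
so integrating over $[0,t]$ gives $\Phi(t,x)^{1-\alpha}-\Psi(t)^{1-\alpha}\ge (\alpha-1)Bt$, and since $\Psi(t)^{1-\alpha}>0$ we get $\Phi(t,x)^{1-\alpha}\ge B(\alpha-1)t$, i.e.\ $t^{1/(\alpha-1)}\Phi(t,x)\le \big(B(\alpha-1)\big)^{-1/(\alpha-1)}=C_{\alpha}$. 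As this holds for a.e.\ $(t,x)$ with $C_{\alpha}$ independent of $x$, taking the essential supremum over $x$ yields \eqref{lemcon}.

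The heat-kernel identities and the uses of Jensen's inequality are routine; the step requiring the most care is the measure-theoretic bookkeeping. In particular, the assumption ``$v(t,x)<\infty$ a.e.'' does not by itself prevent $\psi$ from being $+\infty$ on a whole subinterval, so it is essential that the first displayed inequality delivers $\psi^{\alpha}\in L^{1}(0,t)$ — this is exactly what makes $\Psi$ a finite, absolutely continuous majorant and legitimizes the ODE comparison; and since the self-improving inequality is only available for a.e.\ $\tau$, the comparison must be carried out for $\Psi$ rather than for $\psi$ directly.
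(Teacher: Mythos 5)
Your argument is correct and follows essentially the same route as the paper's proof: propagate the integral inequality with the reproduction formula \eqref{f}, use Jensen/H\"older against the probability measure $\Gamma(0,y;s,x)\,dy$ (via \eqref{d}), and close with the ODE comparison $\Psi'\geq B\Psi^{\alpha}$ integrated through $\Psi^{1-\alpha}$, exactly as the paper does with its auxiliary function $g(t,x)$ and $\varsigma(t)$. The only differences are cosmetic (you apply Jensen before defining the majorant and fix the terminal pair $(t,x)$ rather than fixing $\tau$ and $x\in M_{\tau}$), and your measure-theoretic bookkeeping is sound.
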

Lemma \ref{lem1} implies the following.
\begin{cor}\label{cor2}
    Let $X=\{X_{1},\ldots,X_{m}\}$ be a system of vector fields on $\Rn$ that  satisfy Assumption \ref{as1}. Suppose that $f:[0,\infty)\rightarrow \mathbb{R}$ is a locally integrable function such that $f(u)\geq Bu^{\alpha}$ for some $B>0$ and $\alpha\in(1,\infty)$. Let  $v(t,x)\geq0$ for every $(t,x)\in[0,T]\times \Rn$ be measurable and satisfy 
     \begin{equation}\label{cor3.5:as}
         v(t,x)= \int_{\Rn}\Gamma(0,y;t,x)v_{0}(y)dy+\int_{0}^{t}\int_{\Rn}\Gamma(0,y;t-\tau,x)f(v(\tau,y))dyd\tau,
     \end{equation}
    for a.e.  $(t,x)\in[0,T]\times\Rn$.   Then, we have
    \begin{equation}\label{cor.3.5.est}
        t^{\frac{1}{\alpha-1}}\left\|\int_{\Rn}\Gamma(0,y;t,\cdot)v(\tau, y)dy\right\|_{L^{\infty}(\Rn)}\leq C_{\alpha}:=(B(\alpha-1))^{-\frac{1}{\alpha-1}},
    \end{equation}
 for a.e.   $t\in[0,T-\tau]$ and $\tau\in[0,T]$.
\end{cor}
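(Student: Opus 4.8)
The plan is to reduce the claimed estimate to Lemma \ref{lem1} by a time-translation (semigroup) argument. Fix $\tau \in [0,T]$ and define the shifted function $w(s,x) := v(\tau + s, x)$ for $s \in [0, T-\tau]$, with initial datum $w_0(y) := w(0,y) = v(\tau,y)$. The first step is to show that $w$ satisfies the mild-solution \emph{inequality} \eqref{lemin} on the interval $[0,T-\tau]$ with initial datum $w_0$. Starting from the mild-solution \emph{equality} \eqref{cor3.5:as} written at time $\tau+s$, I would split the time integral $\int_0^{\tau+s} = \int_0^\tau + \int_\tau^{\tau+s}$, substitute $\sigma = \tau'-\tau$ in the second piece, and then use the reproduction formula \eqref{f} to absorb the $\int_0^\tau$-part together with the $\Gamma(0,y;\tau,x)v_0(y)$ term into a single expression $\int_{\Rn}\Gamma(0,y;s,x)\big(\text{something}\big)dy$. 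More precisely, since the first two terms on the right-hand side of \eqref{cor3.5:as} evaluated at time $\tau$ are exactly $v(\tau,x) = w_0(x)$ up to the contribution of $\int_0^\tau \int \Gamma(0,y;\tau-\tau',x)f(v(\tau',y))\,dy\,d\tau'$ (which is nonnegative), applying $\int_{\Rn}\Gamma(0,y;s,\cdot)(\cdot)\,dy$ and the semigroup property \eqref{f} shows that
\[
w(\tau+s,x) \;\geq\; \int_{\Rn}\Gamma(0,y;s,x)\,w_0(y)\,dy \;+\; \int_0^s \int_{\Rn}\Gamma(0,y;s-\sigma,x)\,f(w(\sigma,y))\,dy\,d\sigma\,,
\]
which is precisely \eqref{lemin} for $w$ on $[0,T-\tau]$. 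Here one uses $\Gamma \geq 0$ and the fact that dropping a nonnegative term turns the equality into an inequality; the symmetry \eqref{sym2} and Fubini are used freely, exactly as in the proof of Lemma \ref{lem1}.

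The second step is immediate: $w$ is nonnegative and, by hypothesis, finite a.e.\ on $[0,T-\tau]\times\Rn$, so Lemma \ref{lem1} applies to $w$ and yields, for a.e.\ $s \in [0,T-\tau]$,
\[
s^{\frac{1}{\alpha-1}}\left\|\int_{\Rn}\Gamma(0,y;s,\cdot)\,w_0(y)\,dy\right\|_{L^\infty(\Rn)} \;\leq\; C_\alpha = (B(\alpha-1))^{-\frac{1}{\alpha-1}}\,.
\]
Unwinding the definition $w_0(y) = v(\tau,y)$ and renaming $s$ as $t$, this is exactly \eqref{cor.3.5.est}, valid for a.e.\ $t \in [0,T-\tau]$ and every $\tau \in [0,T]$.

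I expect the main obstacle to be the bookkeeping in the first step: one must carefully check that the time-translated function $w$ genuinely satisfies a mild-solution inequality with the \emph{correct} initial datum, i.e.\ that the reproduction formula \eqref{f} really does collapse the ``tail'' $\int_{\Rn}\Gamma(0,y;s,x)\int_{\Rn}\Gamma(0,z;\tau,y)v_0(z)\,dz\,dy$ plus the shifted forcing integral into the desired form, and that the discarded term $\int_{\Rn}\Gamma(0,y;s,\cdot)\big(\int_0^\tau\int_{\Rn}\Gamma(0,z;\tau-\tau',y)f(v(\tau',z))\,dz\,d\tau'\big)dy$ is indeed nonnegative and may be dropped (this uses $\Gamma \geq 0$ and $f(v) \geq B v^\alpha \geq 0$). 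A secondary subtlety is the a.e.\ qualifier: \eqref{cor3.5:as} holds only for a.e.\ $(t,x)$, so the identities above should be interpreted in the a.e.\ sense and the application of Fubini justified by nonnegativity of all integrands (Tonelli). Once these points are handled the result follows with no further estimation.
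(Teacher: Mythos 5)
Your plan is the paper's own argument: fix $\tau$, shift time by $\tau$, split the Duhamel integral at $\tau$, use the reproduction formula \eqref{f} (with Fubini and \eqref{sym2}) to rewrite the shifted function as a mild solution on $[0,T-\tau]$ with datum $v(\tau,\cdot)$, and then invoke Lemma \ref{lem1} with $v_0$ replaced by $v(\tau,\cdot)$ and $T$ by $T-\tau$; this is correct in substance. One caution on the step you yourself flag: nothing should be \emph{dropped}. After applying \eqref{f}, the term $\int_{\Rn}\Gamma(0,y;s,x)\bigl[\int_{\Rn}\Gamma(0,z;\tau,y)v_0(z)dz\bigr]dy$ and the transported $[0,\tau]$-forcing term must be \emph{recombined}, using the equality \eqref{cor3.5:as} at time $\tau$ under the kernel integral, into exactly $\int_{\Rn}\Gamma(0,y;s,x)v(\tau,y)dy$; if instead you discard the nonnegative forcing piece, the surviving datum is $\int_{\Rn}\Gamma(0,z;\tau,\cdot)v_0(z)dz$ rather than $v(\tau,\cdot)$, and Lemma \ref{lem1} then bounds the wrong quantity, not \eqref{cor.3.5.est}. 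This recombination is precisely why the corollary assumes the mild \emph{equation} rather than the inequality of part (i), and it also handles the a.e.\ issue, since the equality at time $\tau$ is only used inside an integral against the kernel. (Also, a slip of notation: with $w(s,x):=v(\tau+s,x)$ the displayed inequality should have $w(s,x)$, not $w(\tau+s,x)$, on the left.)
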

\begin{proof}[Proof of Corollary \ref{cor2}]
    Throughout this proof we use the property \eqref{sym2} of the heat kernel $\Ga$. For $\tau\in[0,T]$ we define  $\tilde{v}(t,x)=v(t+\tau,x)$. For a.e.  $(t,x)\in[0,T-\tau]\times \Rn$ we have 
    \begin{equation*}
        \begin{split}
       &\tilde{v}(t,x)=v(t+\tau,x)\\&
\stackrel{\eqref{cor3.5:as}}=\int_{\Rn}\Gamma(0,y;t+\tau,x)v_{0}(y)dy+\int_{0}^{t+\tau}\int_{\Rn}\Gamma(0,y;t+\tau-s,x)f(v(s,y))dyds\\&
\stackrel{\eqref{f}}=\int_{\Rn}\Gamma(0,x;t,w)\left[\int_{\Rn}\Gamma(0,w;\tau,y)v_{0}(y)dy\right]dw\\&
+\int_{0}^{\tau}\int_{\Rn}\Ga(0,x;t,w)\left[\int_{\Rn}\Gamma(0,w;\tau-s,y)f(v(s,y))dy\right]dwds\\&
+\int_{\tau}^{t+\tau}\int_{\Rn}\Gamma(0,y;t+\tau-s,x)f(v(s,y))dyds\\&
=\int_{\Rn}\Gamma(0,x;t,w)\left[\int_{\Rn}\Gamma(0,w;\tau,y)v_{0}(y)dy\right]dw\\&
+\int_{\Rn}\Ga(0,x;t,w)\left[\int_{0}^{\tau}\int_{\Rn}\Gamma(0,w;\tau-s,y)f(v(s,y))dyds\right]dw\\&
+\int_{\tau}^{t+\tau}\int_{\Rn}\Gamma(0,y;t+\tau-s,x)f(v(s,y))dyds
\end{split}
    \end{equation*}
\begin{equation*}
        \begin{split}
&\stackrel{\eqref{cor3.5:as}}=\int_{\Rn}\Ga(0,x;t,w)v(\tau,w)dw+\int_{\tau}^{t+\tau}\int_{\Rn}\Gamma(0,y;t+\tau-s,x)f(v(s,y))dyds\\&
=\int_{\Rn}\Ga(0,x;t,w)v(\tau,w)dw+\int_{0}^{t}\int_{\Rn}\Gamma(0,y;t-s,x)f(\tilde{v}(s,y))dyds,
        \end{split}
    \end{equation*}
where, for the last inequality, we have applied a change of variables and Fubini's theorem in several steps. Therefore, Lemma \ref{lem1} with $v_{0}(y)$ replaced by $v(\tau,y)$ and $T$ replaced by $T-\tau$ for a.e. $ \tau \in (0,T)$
completes the proof.
\end{proof}
\begin{proof}[Proof of Theorem \ref{thmnon}]
Below we use Fubini's theorem and the symmetric property \eqref{sym2} of $\Ga$ without explicitly mentioning. 

    First we prove the  non-existence result in the case $1<\alpha<1+\frac{2}{q}$ by contradiction.  Assume that there exists $u$ satisfying \eqref{non1}. Then by 
    Lemma \ref{lem1}, we have 
    \begin{equation}\label{Ca}
        t^{\frac{1}{\alpha-1}}\left|\int_{\Rn}\Gamma(0,y;t,x)u_{0}(y)dy\right|\leq C_{\alpha}\,, \quad \hbox{for all $t \in [0,T].$}
    \end{equation}
    For $\alpha,q$ as in the hypothesis we have
    \begin{equation}\label{asd1}
         \lim_{t \rightarrow \infty} t^{\frac{q}{2}}\left|\int_{\Rn}\Gamma(0,y;t,x)u_{0}(y)dy\right|\leq C_{\alpha} \lim_{t \rightarrow \infty}t^{\frac{q}{2}-\frac{1}{\alpha-1}}=0\,.
    \end{equation}
    The contradiction will be obtained by showing that the above limit is always bounded away from zero for any $u_{0}$, and might even diverge if $u_0 \notin L^1(\Rn)$. We have
    \begin{equation*}
\begin{split}
     t^{\frac{q}{2}}\left|\int_{\Rn}\Gamma(0,y;t,x)u_{0}(y)dy\right|&\stackrel{\eqref{est2}}\geq t^{\frac{q}{2}}\int_{\Rn}\frac{\exp\left(-\frac{\rho d^{2}_{X}(x,y)}{t}\right)}{\rho|B_{X}(x,\sqrt{t})|}u_0(y)dy\\&
  \geq C\frac{t^{\frac{q}{2}}}{\sum\limits_{j=m}^{q}f_{j}(x)t^{\frac{j}{2}}}\int_{\Rn} \exp\left(-\frac{\rho d^{2}_{X}(x,y)}{t}\right)u_{0}(y)dy,
\end{split}
\end{equation*}
which gives 
\begin{equation}\label{asd2}
    \lim_{t \rightarrow \infty} t^{\frac{q}{2}}\left|\int_{\Rn}\Gamma(0,y;t,x)u_{0}(y)dy\right|\geq C \|u_0\|_{L^1(\Rn)}\,,
\end{equation}
    and we have proved part (i) of Theorem \ref{thmnon}.
    
Let us now consider the critical case $\alpha=1+\frac{2}{q}$ for \eqref{non2}. Similarly, suppose that there exists a global solution of \eqref{non2}. We redefine $u$ on a null set so that \eqref{non2}  holds everywhere in $(0, \infty) \times \Rn$ and obtain
\begin{multline}\label{ca3}
u(t+t_{0},x)=\int_{\Rn}\Gamma(0,y;t+t_0,x)u(t_{0},y)dy\\
+\int_{0}^{t+t_0}\int_{\Rn}\Gamma(0,y;t+t_0-\tau,x)f(u(\tau,y))dyd\tau,\quad t_{0}>0.
\end{multline} 
For $u(t,x)$ satisfying \eqref{non2} using Corollary \ref{cor2} and $t>1$, we have
\begin{equation*}
\begin{split}
   \int_{\mathbb{R}^{n}} \exp\left(-\frac{\rho d^{2}_{X}(0,y)}{t}\right)u(\tau,y)dy&\stackrel{\eqref{est2}}\leq \rho|B_{X}(0,\sqrt{t})|\int_{\mathbb{R}^{n}} \Ga(0,x;t,y)u(\tau,y)dy\\&
  \leq \rho  t^{\frac{q}{2}}|B_{X}(0,1)|\int_{\mathbb{R}^{n}} \Ga(0,x;t,y)u(\tau,y)dy\\&
   \leq C\rho  t^{\frac{1}{\alpha-1}}\left\| \int_{\mathbb{R}^{n}} \Ga(0,\cdot;t,y)u(\tau,y)dy\right\|_{L^{\infty}(\Rn)}\\&
   \stackrel{\eqref{cor.3.5.est}}\leq C'\,,
\end{split}
\end{equation*}
and allowing $t \rightarrow \infty$ the above yields
\begin{equation}\label{thm.contr}
    \|u(0,\cdot)\|_{L^1(\Rn)}\leq C'\,,\quad \hbox{for a.e. $\tau\geq 0$}.
\end{equation}
Since $u(t,x)$ satisfies \eqref{non2} with the use of \eqref{est2} we obtain
\begin{equation*}
    \begin{split}
        u(t,x)&\geq \int_{\Rn}\Gamma(0,y;t,x)u_{0}(y)dy\\&
        \geq \frac{1}{\rho|B_{X}(x,\sqrt{t})|}\int_{\Rn}\exp\left(-\frac{\rho d^{2}_{X}(x,y)}{t}\right)u_{0}(y)dy\\&
        \geq\frac{\exp\left(-\frac{2\rho d^{2}_{X}(x,0)}{t}\right)}{\rho|B_{X}(x,\sqrt{t})|} \int_{\Rn}\exp\left(-\frac{2\rho d^{2}_{X}(y,0)}{t}\right)u_{0}(y)dy,
    \end{split}
\end{equation*}
where for the last estimate we have used the fact that the CC-distance satisfies the triangle inequality.
The latter implies that for all $x \in \Rn$ we have the estimate
\begin{equation*}
    \begin{split}
        u(2\rho^{2},x)&\geq \frac{\exp\left(-\frac{ d^{2}_{X}(x,0)}{\rho}\right)}{\rho|B_{X}(x,\sqrt{2}\rho)|} \int_{\Rn}\exp\left(-\frac{d^{2}_{X}(y,0)}{\rho}\right)u_{0}(y)dy
        \end{split}
\end{equation*}
        \begin{equation*}
    \begin{split}
       &\stackrel{\eqref{est2}}\geq \frac{1}{\rho|B_{X}(x,\sqrt{2}\rho)|}\frac{\Ga(0,0;1,x) |B_X(x,1)|}{\rho}\int_{\Rn}\exp\left(-\frac{d^{2}_{X}(y,0)}{\rho}\right)u_{0}(y)dy\\&
        \stackrel{\eqref{prop.ball}}\geq \frac{\Ga(0,0;1,x)}{\rho^2}\gamma_1 \left( \frac{1}{\sqrt{2}\rho}\right)^n \int_{\Rn}\exp\left(-\frac{ d^{2}_{X}(y,0)}{\rho}\right)u_{0}(y)dy\\&
        =C\Ga(0,0;1,x)\,,
    \end{split}
\end{equation*}
since $\int_{\Rn}\exp\left(-\frac{\rho d^{2}_{X}(y,0)}{\rho}\right)u_{0}(y)dy< \infty$ due to the fact that by the above $u_0 \in L^1(\Rn)$. 
The last fact can be shown from \eqref{asd1}, \eqref{asd2} and $\alpha=1+\frac{2}{q}$, and  we have $u_0 \in L^1(\Rn)$. 
Consequently by \eqref{f} and \eqref{ca3}, we have 
\begin{equation*}
        u(s+2\rho^{2},x)\geq \int_{\Rn}\Gamma(0,y;s,x)u(2\rho^{2},y)dy
\end{equation*}
        \begin{equation}\label{ocen4}
    \begin{split}
&\geq C\int_{\Rn}\Gamma(0,y;s,x)\Ga(0,0;1,y)dy\\&
=C\Ga(0,0;s+1,x),\quad \hbox{for any $s\geq 0$, $x \in \Rn$.}
    \end{split}
\end{equation}
For all $s>0$ we have
\begin{equation}\label{ocen3}
    \begin{split}
        &\|\Ga^{\alpha}(0,0;s+1,\cdot)\|_{L^{1}(\Rn)}=\int_{\Rn}\Ga^{\alpha}(0,0;s+1,y)dy\\&
       \stackrel{\eqref{est2}}\geq \frac{1}{\rho^{\alpha}|B_{X}(0,\sqrt{s+1})|^{\alpha}} \int_{\Rn}\exp\left(-\frac{\alpha\rho d^{2}_{X}(0,y)}{s+1}\right)dy\\&
       =\frac{|B_{X}(0,\sqrt{s+1})|}{\rho^{\alpha+1}|B_{X}(0,\sqrt{s+1})|^{\alpha}}\left[\frac{\rho}{|B_{X}(0,\sqrt{s+1})|}\int_{\Rn}\exp\left(-\frac{d^{2}_{X}(0,y)}{\rho\left(\frac{s+1}{\alpha \rho^{2}}\right)}\right)dy\right]\\&
       \stackrel{\eqref{est2}}\geq \frac{|B_{X}(0,\sqrt{s+1})|}{\rho^{\alpha+1}|B_{X}(0,\sqrt{s+1})|^{\alpha}}\frac{|B_{X}(0,\sqrt{\frac{s+1}{\alpha\rho^2}})|}{|B_{X}(0,\sqrt{s+1})|}\int_{\Rn}\Ga\left(0,0;\frac{s+1}{\alpha \rho^{2}},y\right)dy\\&
       \stackrel{\eqref{prop.ball}}\geq \frac{|B_{X}(0,\sqrt{s+1})|}{\rho^{\alpha+1}|B_{X}(0,\sqrt{s+1})|^{\alpha}} C\int_{\Rn}\Ga\left(0,0;\frac{s+1}{\alpha \rho^{2}},y\right)dy \\&
       \stackrel{\eqref{d}}=C\frac{|B_{X}(0,\sqrt{s+1})|}{\rho^{\alpha+1}|B_{X}(0,\sqrt{s+1})|^{\alpha}}\\&
       = C \frac{|B_{X}(0,\sqrt{s+1})|^{1-\alpha}}{\rho^{\alpha+1}}\\&
       \stackrel{\eqref{propcc}}\geq C (s+1)^{\frac{q(1-\alpha)}{2}} \\&
       =C(s+1)^{-1}\,,
    \end{split}
\end{equation}
where for the last inequality we used the relation $\alpha=1+\frac{2}{q}$.
Using Fubini's theorem and \eqref{d}, for fixed $t>0$ we have
\begin{equation}\label{ocen5}
\begin{split}
\left\|\int_{\Rn}\Ga(0,\cdot;t,y)\varphi(y)dy\right\|_{L^{1}(\Rn)}&=\int_{\Rn}\int_{\Rn}\Ga(0,x;t,y)\varphi(y)dydx\\&
=\int_{\Rn}\varphi(y)\left[\int_{\Rn}\Ga(0,y;t,x)dx\right]dy\\&
=\int_{\Rn}\varphi(y)dy\,.
\end{split}
\end{equation}
Before moving on to the final estimate contradicting our assumption, we first need to make the following auxiliary estimate
\begin{equation}\label{EQ:aux}
\begin{split}
    u(t+2\rho^2,x)&\stackrel{\eqref{ca3}}\geq \int_{0}^{t+2\rho^2}\int_{\Rn}\Ga(0,y;t+2\rho^2-\tau,x)f(u(\tau,y))dyd\tau\\&
    \stackrel{s=\tau+2\rho^2}=\int_{-2\rho^2}^{t}\int_{\Rn}\Ga(0,y;t-s,x)f(u(s+2\rho^2,y))dyds\\&
    \stackrel{\eqref{EQ:s<t}}=\int_{0}^{t}\int_{\Rn}\Ga(0,y;t-s,x)f(u(s+2\rho^2,y))dyds\,.
    \end{split}
\end{equation}
Now by  \eqref{ocen5} for $\phi(y)=\Gamma^{\alpha}(0,0;\tau+1,y)$ we obtain
\begin{equation}
    \label{EQ:Aux2}
    \int_{\Rn}\int_{\Rn}\Gamma(0,y;t-\tau,x)\Gamma^{\alpha}(0,0;\tau+1,y)dxdy=\int_{\Rn}\Gamma^{\alpha}(0,0;\tau+1,y)dy\,.
\end{equation}
We have 
\begin{equation*}
    \begin{split}
   \|u(t+2\rho^{2},\cdot)\|_{L^{1}(\Rn)}&\stackrel{\eqref{EQ:aux}}\geq\int_{0}^{t}\int_{\Rn}\int_{\Rn}\Gamma(0,y;t-\tau,x)f(u(\tau+2\rho^{2},y))dxdyd\tau\\&
   \stackrel{f(u)\geq Bu^\alpha}\geq B\int_{0}^{t}\int_{\Rn}\int_{\Rn}\Gamma(0,y;t-\tau,x)u^{\alpha}(\tau+2\rho^{2},y)dxdyd\tau\\&
   \stackrel{\eqref{ocen4}}\geq CB\int_{0}^{t}\int_{\Rn}\int_{\Rn}\Gamma(0,y;t-\tau,x)\Gamma^{\alpha}(0,0;\tau+1,y)dxdyd\tau\\&
\stackrel{\eqref{EQ:Aux2}}=CB\int_{0}^{t}\int_{\Rn}\Gamma^{\alpha}(0,0;\tau+1,y)dyd\tau\\&
   \stackrel{\eqref{ocen3}}\geq C\int_{0}^{t}(\tau+1)^{-1}d\tau\,,
     \end{split}
\end{equation*}
and we have shown that for all $t>0$ we have 
\begin{equation}\label{contra}
     \|u(t+2\rho^{2},\cdot)\|_{L^{1}(\Rn)}\geq C \log(t+1)\,.
\end{equation}

Allowing $t\rightarrow +\infty$ in \eqref{contra} and in \eqref{thm.contr} we get a contradiction.
Therefore,  the global solution to \eqref{non2} does not exist. Summarising, the proof of Theorem \ref{thmnon} is now complete.
\end{proof}
\begin{proof}[Proof of Lemma \ref{lem1}]
In several steps of this proof, we use Fubini's theorem and the symmetric property \eqref{sym2} without further explanations. For  $0<\tau<t$, \eqref{f} and any measurable function $G:\Rn\rightarrow[0,+\infty]$, we have 
 \begin{equation}\label{prlem1}
     \begin{split}  
     \int_{\Rn}\Ga(0,x;t,y)G(y)dy&
     \stackrel{\eqref{f}}=\int_{\Rn}\int_{\Rn}\Ga(0,w;t-\tau,x)\Ga(0,y;\tau,w)G(y)dwdy\\& 
     =\int_{\Rn}\Ga(0,w;t-\tau,x)\int_{\Rn}\Ga(0,y;\tau,w)G(y)dydw\,.
     \end{split}
 \end{equation}
 Now, for the same $G$, using the H\"{o}lder inequality with $\frac{1}{\alpha}+\frac{1}{\alpha'}=1$ we have the identity
 \begin{equation}\label{prlem2}
     \begin{split}
\left(\int_{\Rn}\Ga(0,x;t,y)G(y)dy\right)^{\alpha}&=\left(\int_{\Rn}\Ga^{\frac{1}{\alpha'}}(0,x;t,y)\Ga^{\frac{1}{\alpha}}(0,x;t,y)G(y)dy\right)^{\alpha}\\&
\leq \left(\int_{\Rn}\Ga(0,x;t,y)dy\right)^{\frac{\alpha}{\alpha'}} \left(\int_{\Rn}\Ga(0,x;t,y)G^{\alpha}(y)dy\right)\\&
\stackrel{\eqref{d}}=\left(\int_{\Rn}\Ga(0,x;t,y)G^{\alpha}(y)dy\right).
     \end{split}
 \end{equation}
 Now, let us redefine $u$ on a null set and suppose that \eqref{lemin} holds everywhere in $[0, T] \times \Rn$. Fix $\tau \in [0,T]$ and denote $M_{\tau} :=\{x\in\Rn:v(\tau,x)<\infty\}$. For every $t \in [0,\tau]$, we have
 \begin{equation}\label{prlem3}
     \begin{split}
        & \int_{\Rn}\Ga(0,x;\tau-t,w)v(t,w)dw\stackrel{\eqref{lemin}}\geq\int_{\Rn}\int_{\Rn}\Ga(0,x;\tau-t,w)\Gamma(0,y;t,w)v_{0}(y)dydw\\&
  +\int_{0}^{t}\int_{\Rn}\Ga(0,x;\tau-t,w)\int_{\Rn}\Gamma(0,y;t-s,w)f(v(s,y))dyds dw\\&
\stackrel{\eqref{f},\eqref{prlem1}}=\int_{\Rn}\Ga(0,x;\tau,w)v_{0}(y)dy+\int_{0}^{t}\int_{\Rn}\Ga(0,x;\tau-s,y)f(v(s,y))dyds\\&
\stackrel{f(u)\geq Bu^\alpha}\geq\int_{\Rn}\Ga(0,x;\tau,y)v_{0}(y)dy+B\int_{0}^{t}\int_{\Rn}\Ga(0,x;\tau-s,y)v^{\alpha}(s,y)dyds\\&
:=g(t,x),
     \end{split}
 \end{equation}
where \eqref{prlem1} was used for $G=G(\cdot)=f(v(s,\cdot))$.

 By \eqref{lemin} and the above we have 
 \begin{equation*}
 \begin{split}
     g(\tau,x)&= \int_{\Rn}\Ga(0,x;\tau,y)v_{0}(y)dy+B\int_{0}^{t}\int_{\Rn}\Ga(0,x;\tau-s,y)v^{\alpha}(s,y)dyds\\&
     \leq v(\tau,x),
 \end{split}
 \end{equation*}
where $g(t,x)<\infty$  for a.e. $(t,x)\in [0,\tau]\times M_{\tau}$. For a fixed $x\in M_{\tau}$ the function  $\varsigma (t):=g(t,x)$ is absolutely continuous on $[0,\tau]$. (Indeed $\varsigma(t)=\varsigma(0)+\int_{0}^{t}h(s)ds$, where the function $h \in L^{1}([0,\tau])$ is the one arising from the definition of $g$.) Hence $\varsigma$ is differentiable almost everywhere in $[0,\tau]$ with derivative
 \begin{equation}\label{prlem4}
\frac{d\varsigma(t)}{dt}\stackrel{\eqref{prlem2}}\geq B\left[\int_{\Rn}\Ga(0,x;\tau-t,w)v      (t,w)dw\right]^{\alpha}
\stackrel{\eqref{prlem3}}\geq B\varsigma^{\alpha}(t),\,\,\,\text{ for}\, t\in[0,\tau].
 \end{equation}
 Now since $\varsigma(t)>0$ and $1-\alpha<0$ we obtain
 \[
 \frac{d\varsigma^{1-\alpha}(t)}{dt}\stackrel{\eqref{prlem4}}\leq -B(\alpha-1)\,,
 \]
 and integrating the latter over $[0,\tau]$  we get 
 \begin{equation*}
     \left[\int_{\Rn}\Ga(0,x;\tau,w)v_{0}(w)dw\right]^{1-\alpha}=\varsigma^{1-\alpha}(0)\geq \varsigma^{1-\alpha}(\tau)+B(\alpha-1)\tau\geq B(\alpha-1)\tau,
 \end{equation*}
 that is,
 \begin{equation*}
     \tau^{\frac{1}{\alpha-1}}\left\|\int_{\Rn}\Ga(0,\cdot;\tau,w)v_{0}(w)dw\right\|_{L^{\infty}(\Rn)}\leq (B(\alpha-1))^{-\frac{1}{\alpha-1}}.
 \end{equation*}
 Since for $v_0 \in L^{\infty}(\Rn)$ the function 
 \begin{equation*}
     t\mapsto t^{\frac{1}{\alpha-1}}\left\|\int_{\Rn}\Ga(0,\cdot;t,w)v_{0}(w)dw\right\|_{L^{\infty}(\Rn)}
 \end{equation*}
 is continuous in  $t \in [0,T]$ and $\tau \in [0,T]$ was arbitrary, the latter inequality holds true for a.e. $t \in [0,T]$, and the proof is complete.
 \end{proof}

\section{Time-dependent case}
In this section we consider the Cauchy problem \eqref{intro.prob} adding a time-dependent function $\varphi$ in front of the non-linearity $f(u)$, that is we consider the non-linear heat equation 
\begin{equation}
    \label{prob2}
    \begin{cases}
         u_{t}(t,x)-\L u(t,x)= \varphi(t)f(u(t,x)),\,\,\,\,\,(t,x)\in \mathbb{R}_{+}\times \Rn,\\
          u(0,x)=u_{0}(x),\,\,\,\,\,x\in \Rn,
    \end{cases}
\end{equation}
where $\L=\sum\limits_{i=1}^{m}X_{i}^{2}$ is a H\"{o}rmander sum of squares as before.  

The mild solution to the Cauchy problem \eqref{prob2} is given by 
\begin{equation}\label{solutionto-t}
 u(t,x)= \int_{\Rn}\Gamma(0,y;t,x)u_{0}(y)dy+\int_{0}^{t}\int_{\Rn}\Gamma(0,y;t-\tau,x)\varphi(\tau)f(u(\tau,y))dyd\tau\,.
\end{equation}

Before stating the assumptions on the involved function $f$ and $\varphi$, we need to define:
\begin{defn}
We define the  \emph{majorant} function \( f_M : [0,+\infty) \to [0,+\infty) \) associated with a given function \( f \) by
\begin{equation}
    f_M(v) := \sup_{\alpha \in (0,1)} \frac{f(\alpha v)}{f(\alpha)}, \qquad v \geq 0.
\end{equation}
\end{defn}
An immediate property of $f_M$ is then
\begin{equation}\label{maj1}
 f(\alpha v)\leq f(\alpha)f_{M}(v),\,\,\,\alpha\in (0,1),\,\,\,\,v\geq 0.    
\end{equation}

\begin{assumptions*}\label{assumption,f,phi}
We assume that $f$ is a  continuous, non-negative function with $f(0)=0$ and $f(v)>0$ when $v>0$, and that the mapping  $v\mapsto\frac{f(v)}{v}$ is non-decreasing. Moreover, we assume that  the associated majorant function $f_M$ satisfies\begin{equation}
    \label{maj}
    \lim_{v \rightarrow 0^{+}}\frac{f_M(v)}{v}=0\,.
\end{equation} For the function $\varphi$ we just assume that $0\leq\varphi\in L^{1}_{\text{loc}}[0,+\infty)$. 
\end{assumptions*}
For $f$ and $\varphi$ satisfying the assumptions above, we have the following theorem: 
\begin{thm}
    \label{thm.t-depen}
      Let $X=\{X_{1},\ldots,X_{m}\}$ be a system of vector fields on $\Rn$ that satisfies Assumption \ref{as1}.  If \eqref{prob2} does not have a global solution for any $0<u_0 \in  L^{1}(\Rn)$, then
      \begin{enumerate}[label=(\roman*)]
    \item for all $0<w \in  L^{1}(\Rn)$, we have:
    \[
    \int_{0}^{\infty}\varphi(\tau) \frac{f\left( \|\int_{\Rn}\Gamma(0,y;t,\cdot)w(y)dy \|_{L^{\infty}(\Rn)}\right)}{\|\int_{\Rn} \Gamma(0,y;t,\cdot)w(y)\|_{L^{\infty}(\Rn)}} d\tau=+\infty\,,
    \]
    \item and for every $\omega>0$ we have 
    \[\int_{1}^{\infty}\varphi(\tau)\tau^{\frac{q}{2}} f(\omega \tau^{-\frac{q}{2}})d\tau=+\infty\,,
    \]
   where $q$ is the homogeneous dimension given by \eqref{homdim}.
\end{enumerate}

\end{thm}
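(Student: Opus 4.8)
The plan is to establish (i) by contraposition and then deduce (ii) from (i). Throughout I would write $(S_tg)(x):=\int_{\Rn}\Gamma(0,y;t,x)g(y)\,dy$ for the heat propagator of $\mathcal{H}=\partial_t-\L$; by the reproduction formula \eqref{f} (together with the symmetry \eqref{sym2}) one has the semigroup identity $S_{t-\tau}S_\tau=S_t$ for $0\le\tau\le t$, and each $S_t$ is an $L^1$-contraction by \eqref{d}. For $0<w\in L^1(\Rn)$ set $\Phi_w(\tau):=\|S_\tau w\|_{L^\infty(\Rn)}$, finite for $\tau>0$ by Theorem \ref{THM:LpnormGamma}. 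Assuming that \eqref{prob2} admits no global solution for any $0<u_0\in L^1(\Rn)$, to obtain (i) I would suppose for contradiction that some $0<w\in L^1(\Rn)$ makes the integral in (i) finite, and construct a global mild solution \eqref{solutionto-t} with datum $u_0=\lambda_0w$, $\lambda_0>0$ small, via the monotone iteration of Weissler used in the proof of Theorem \ref{thmgl}.

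The preliminary reduction is that \eqref{maj} forces $\lim_{v\to0^+}f(v)/v=0$: since $v\mapsto f(v)/v$ is non-decreasing and $f(0)=0$ the function $f$ is non-decreasing, and letting $\alpha\to1^-$ in the definition of $f_M$ gives $f(v)\le f(1)f_M(v)$, whence $f(v)/v\le f(1)f_M(v)/v\to0$. Then, for $\lambda_0\le1/2$, $\varphi(\tau)\,f(2\lambda_0\Phi_w(\tau))/(2\lambda_0\Phi_w(\tau))\le\varphi(\tau)\,f(\Phi_w(\tau))/\Phi_w(\tau)$ by monotonicity, and the left side $\to0$ pointwise as $\lambda_0\to0^+$, so by dominated convergence I can fix $\lambda_0\in(0,1/2]$ with $\int_0^\infty\varphi(\tau)\,f(2\lambda_0\Phi_w(\tau))/(2\lambda_0\Phi_w(\tau))\,d\tau\le\log2$. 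Setting $u_0:=\lambda_0w$, $\Psi(\tau):=\lambda_0\Phi_w(\tau)=\|S_\tau u_0\|_{L^\infty}$ and $\chi(t):=\exp\big(\int_0^t\varphi(\tau)\,f(2\Psi(\tau))/(2\Psi(\tau))\,d\tau\big)$, one has $\chi(0)=1$, $\chi\le2$, and $\chi(t)=1+\int_0^t\varphi(\tau)\chi(\tau)\,f(2\Psi(\tau))/(2\Psi(\tau))\,d\tau$. With $\mathcal{K}v(t,x):=(S_tu_0)(x)+\int_0^t\varphi(\tau)(S_{t-\tau}f(v(\tau,\cdot)))(x)\,d\tau$ and $h_0:=S_\cdot u_0$, $h_{k+1}:=\mathcal{K}h_k$, the crucial step is the inclusion: if $S_tu_0\le h\le\chi(t)S_tu_0$, then $S_tu_0\le\mathcal{K}h\le\chi(t)S_tu_0$. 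This follows because, by monotonicity of $f(v)/v$ and $h(\tau,y)\le\chi(\tau)(S_\tau u_0)(y)\le2\Psi(\tau)$,
\[
f(h(\tau,y))\le\frac{f(\chi(\tau)(S_\tau u_0)(y))}{\chi(\tau)(S_\tau u_0)(y)}\,h(\tau,y)\le (S_\tau u_0)(y)\,\frac{f(\chi(\tau)\Psi(\tau))}{\Psi(\tau)}\le (S_\tau u_0)(y)\,\chi(\tau)\,\frac{f(2\Psi(\tau))}{2\Psi(\tau)},
\]
and then $S_{t-\tau}S_\tau=S_t$ turns $\int_0^t\varphi(\tau)(S_{t-\tau}f(h(\tau,\cdot)))(x)\,d\tau$ into $(S_tu_0)(x)\int_0^t\varphi(\tau)\chi(\tau)\,f(2\Psi(\tau))/(2\Psi(\tau))\,d\tau$, i.e. $\mathcal{K}h\le\chi S_\cdot u_0$, the lower bound being trivial. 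Since $f$ is non-decreasing, $\mathcal{K}$ is order-preserving, so $\{h_k\}$ increases in $k$ inside $[S_\cdot u_0,2S_\cdot u_0]$; it converges pointwise and in $L^1(\Rn)$ to $u$, and passing to the limit in $h_{k+1}=\mathcal{K}h_k$ (dominated convergence, $f$ continuous) gives $u=\mathcal{K}u$, a global solution of \eqref{solutionto-t}. As $0<u_0=\lambda_0w\in L^1(\Rn)$, this contradicts the hypothesis, proving (i).

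For (ii), assuming again that no global solution exists for any positive $L^1$ datum, part (i) holds, and I would apply it with $w:=\omega_0\Gamma(0,0;\varrho,\cdot)$ for a fixed $\varrho>0$ and arbitrary $\omega_0>0$; this $w$ is positive and lies in $L^1(\Rn)$ by \eqref{d}. By \eqref{f}, $S_\tau w=\omega_0\Gamma(0,0;\tau+\varrho,\cdot)$, and combining \eqref{est2} with \eqref{ocencc1} (for the upper bound) and with \eqref{prop.ball} (to bound $|B_X(0,\sqrt{\tau+\varrho})|$ from above) yields $c\,\omega_0\tau^{-q/2}\le\Phi_w(\tau)\le C\,\omega_0\tau^{-q/2}$ for $\tau\ge1$, with $c,C>0$ depending only on $X$ and $\varrho$; on $[0,1]$, $\Phi_w$ stays in a compact subinterval of $(0,\infty)$, so $\int_0^1\varphi(\tau)\,f(\Phi_w(\tau))/\Phi_w(\tau)\,d\tau<\infty$ because $\varphi\in L^1_{\mathrm{loc}}$ and $f(v)/v$ is continuous there. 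Hence $\int_1^\infty\varphi(\tau)\,f(\Phi_w(\tau))/\Phi_w(\tau)\,d\tau=+\infty$ by (i), and, using $\Phi_w(\tau)\ge c\omega_0\tau^{-q/2}$ with $f(v)/v$ non-decreasing,
\[
+\infty=\int_1^\infty\varphi(\tau)\,\frac{f(\Phi_w(\tau))}{\Phi_w(\tau)}\,d\tau\ \ge\ \frac1{c\omega_0}\int_1^\infty\varphi(\tau)\,\tau^{q/2}\,f(c\,\omega_0\,\tau^{-q/2})\,d\tau.
\]
Given any $\omega>0$, taking $\omega_0=\omega/c$ gives $\int_1^\infty\varphi(\tau)\tau^{q/2}f(\omega\tau^{-q/2})\,d\tau=+\infty$, which is (ii).

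The hard part will be the construction in (i). The decisive observation is that, once the datum is shrunk, the comparison multiplier $\chi$ solves a \emph{linear} Volterra equation and is therefore an exponential whose exponent is controlled—indeed made $\le\log2$—by the assumed convergence of the integral in (i) together with $\lim_{v\to0^+}f(v)/v=0$; the remaining care goes into verifying that $\mathcal{K}$ preserves the order interval $[S_\cdot u_0,\chi S_\cdot u_0]$, that the iterates are dominated in $L^1(0,t;L^1(\Rn))$ so that the monotone limit may be substituted into \eqref{solutionto-t}, and that $t\mapsto u(t)\in L^1(\Rn)$ is continuous, all exactly as in the proof of Theorem \ref{thmgl}. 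The only point in (ii) requiring attention is the finiteness of the integral over $[0,1]$, which uses nothing beyond $\varphi\in L^1_{\mathrm{loc}}$ and the uniform upper and lower bounds on $\Phi_w$ over compact time intervals.
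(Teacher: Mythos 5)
Your part (i) is correct and follows essentially the paper's route: argue by contradiction and run a Weissler-type monotone iteration that stays in the order interval between $S_\cdot u_0$ and $\chi(\cdot)S_\cdot u_0$. Your bookkeeping is in fact slightly cleaner than the paper's: instead of the majorant-function estimate with the $(1+Z)$ multiplier and an $\epsilon$--$\delta$ choice, you observe that \eqref{maj} forces $\lim_{v\to0^+}f(v)/v=0$ (letting $\alpha\to1^-$ in the definition of $f_M$, which is valid since $f$ is continuous and $f(1)>0$), shrink $\lambda_0$ by dominated convergence so that the exponent of the Volterra multiplier is at most $\log 2$, and then the invariance of the interval under $\mathcal{K}$ follows from the monotonicity of $v\mapsto f(v)/v$ and the reproduction formula, exactly as in the proof of Theorem \ref{thmgl}.

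Part (ii), however, has a genuine logical gap: the final comparison points the wrong way. You know $\int_1^\infty\varphi(\tau)\,f(\Phi_w(\tau))/\Phi_w(\tau)\,d\tau=+\infty$ and then bound this divergent integral \emph{from below} by $\frac{1}{c\omega_0}\int_1^\infty\varphi(\tau)\tau^{q/2}f(c\omega_0\tau^{-q/2})\,d\tau$, using the lower bound $\Phi_w(\tau)\ge c\omega_0\tau^{-q/2}$. The displayed inequality is true, but divergence of the larger quantity tells you nothing about the smaller one, so the conclusion does not follow as written. The correct step uses the \emph{upper} bound you already derived: from $\Phi_w(\tau)\le C\omega_0\tau^{-q/2}$ and the monotonicity of $v\mapsto f(v)/v$ one gets $f(\Phi_w(\tau))/\Phi_w(\tau)\le \frac{1}{C\omega_0}\,\tau^{q/2}f\left(C\omega_0\tau^{-q/2}\right)$, so the divergence of the left-hand integral forces $\int_1^\infty\varphi(\tau)\tau^{q/2}f(C\omega_0\tau^{-q/2})\,d\tau=+\infty$, and taking $\omega_0=\omega/C$ gives (ii). This is precisely the paper's argument, where the upper heat-kernel bound \eqref{est2} with \eqref{ocencc1} yields $\|A_{u_0}(\tau,\cdot)\|_{L^\infty}\le\omega\tau^{-q/2}$ with $\omega=C^{-1}\rho\|u_0\|_{L^1}$. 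Once this is fixed, your choice $w=\omega_0\Gamma(0,0;\varrho,\cdot)$ is also unnecessary: any $0<u_0\in L^1\cap L^\infty$ works, since on $[0,1]$ the ratio is controlled by $f(\|u_0\|_{L^\infty})/\|u_0\|_{L^\infty}$ via the contraction property coming from \eqref{d}, together with $\varphi\in L^1_{\mathrm{loc}}$.
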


\begin{proof}
    To reach a contradiction, assume that there exists $w>0$ such that
   \begin{equation}
       \label{Z}
       Z:=\int_{0}^{\infty}\varphi(\tau) \frac{f\left( \|\int_{\Rn}\Gamma(0,y;t,\cdot)w(y)dy \|_{L^{\infty}(\Rn)}\right)}{\|\int_{\Rn} \Gamma(0,y;t,\cdot)w(y)\|_{L^{\infty}(\Rn)}} d\tau<\infty
   \end{equation}
   where $w>0$ is a positive function such that $w\in L^{1}(\Rn)$. A contradiction arises by showing that \eqref{prob2} admits a global solution for some $0<u_{0}\in  L^{1}(\Rn)$.

    We define $u_0:=\lambda w$ and  also choose $\lambda    \in (0,1)$ such that 
\begin{equation}\label{ocenz2}
   \sqrt{\lambda} \left\| \int_{\Rn}\Gamma(0,y;t,\cdot)w(y)dy \right\|_{L^{\infty}(\Rn)}< 1\,.
\end{equation}
Note that condition \eqref{maj} implies that for each $\epsilon>0$, there exists $\delta>0$ such that if $0<\sqrt{\lambda}(1+Z)<\delta$, then  
$\frac{f_{M} (\sqrt{\lambda} (1+Z))}{\sqrt{\lambda} (1+Z)}<\epsilon,$ so that
\begin{equation}\label{epsilon}
0\leq \frac{f_{M} (\sqrt{\lambda} (1+Z))}{\lambda}<(1+Z)\frac{\epsilon}{\sqrt{\lambda}}\,.\end{equation}
 Let us now define the auxiliary sequence of positive functions $\{v_\zeta\}_{\zeta\geq0}$ on $\mathbb{R}_{+}\times \Rn$ by 
\begin{equation*}
  v_{\zeta}(t,x):=
  \begin{cases}
    \displaystyle \int_{\mathbb{R}^n} \Gamma(0,y;t,x)\, u_{0}(y)\, dy, 
    & \text{if } \zeta = 0, \\
    \begin{aligned}
      &\displaystyle \int_{\mathbb{R}^n} \Gamma(0,y;t,x)\, u_{0}(y)\, dy \\
      &\quad  +\int_{0}^{t}\int_{\mathbb{R}^n} \Gamma(0,y;t-\tau,x)\, \varphi(\tau)\, f(v_{\zeta-1}(\tau,y))\, dy\, d\tau,
    \end{aligned}
    & \text{if } \zeta \geq 1.
  \end{cases}
\end{equation*}
We proceed by induction to show that, for each \( \zeta \geq 1 \) and all \( (t,x) \in \mathbb{R}_+ \times \mathbb{R}^n \), the following property is satisfied:

\begin{equation}
    \label{vk<vo}
    v_\zeta(t,x) \leq (1+Z) \int_{\Rn}\Gamma(0,y;t,x)u_{0}(y)dy\,.
\end{equation}
Inequality \eqref{vk<vo} is immediate when $k=0$ since we have 
\[v_{0}(t,x)= \int_{\Rn}\Gamma(0,y;t,x)u_{0}(y)dy\leq (1+Z)\int_{\Rn}\Gamma(0,y;t,x)u_{0}(y)dy\]for all $(t,x)\in \mathbb{R}_{+}\times \Rn$. 
Let $\zeta \in \mathbb{N}$, and assume that
\begin{equation}\label{vzeta1}
    v_{\zeta}(t,x) \leq (1+Z)\int_{\Rn}\Gamma(0,y;t,x)u_{0}(y)dy.
\end{equation}
Using the inductive hypothesis we have
\[
\begin{aligned}
v_{\zeta+1}(t,x) &\leq \int_{\mathbb{R}^n} \Gamma(0,y;t,x) u_0(y)  dy \\
&\quad + \int_0^t \int_{\mathbb{R}^n} \Gamma(0,y;t-\tau,x) \varphi(\tau) f\left((1+Z) \int_{\mathbb{R}^n} \Gamma(0,z;\tau,y) u_0(z)  dz\right) dy  d\tau.
\end{aligned}
\]

Let
\[
A_{u_0}(t;y):=\int_{\Rn}\Gamma(0,z;t,y)u_0(z)\,dz.
\]

Using the fact that $\frac{f(v)}{v}$ is a non-decreasing function, and noting that $A_{u_0}(\tau,y) > 0$ (since $u_0 > 0$ and the heat kernel is positive), we get

\[
\begin{aligned}
v_{\zeta+1}(t,x) &\leq A_{u_0}(t,x) + \int_0^t \int_{\mathbb{R}^n} \Gamma(0,y;t-\tau,x) \varphi(\tau) \left[ A_{u_0}(\tau,y) \frac{f((1+Z) A_{u_0}(\tau,y))}{A_{u_0}(\tau,y)} \right] dy  d\tau \\
&\leq A_{u_0}(t,x) + \int_0^t \int_{\mathbb{R}^n} \Gamma(0,y;t-\tau,x) \varphi(\tau) \left[ A_{u_0}(\tau,y) \frac{f((1+Z) \|A_{u_0}(\tau,\cdot)\|_{L^\infty})}{\|A_{u_0}(\tau,\cdot)\|_{L^\infty}} \right] dy  d\tau \\
&= A_{u_0}(t,x) + \frac{f((1+Z) \|A_{u_0}(\tau,\cdot)\|_{L^\infty})}{\|A_{u_0}(\tau,\cdot)\|_{L^\infty}} \int_0^t \varphi(\tau) \left[ \int_{\mathbb{R}^n} \Gamma(0,y;t-\tau,x) A_{u_0}(\tau,y)  dy \right] d\tau.
\end{aligned}
\]

Note that by the reproduction formula \eqref{f} we have 

\[
\int_{\mathbb{R}^n} \Gamma(0,y;t-\tau,x) A_{u_0}(\tau,y)  dy = A_{u_0}(t,x)\,,
\]
so that 
\[
\begin{aligned}
v_{\zeta+1}(t,x) &\leq A_{u_0}(t,x) + A_{u_0}(t,x) \int_0^t \varphi(\tau) \frac{f((1+Z) \|A_{u_0}(\tau,\cdot)\|_{L^{\infty}(\Rn)})}{\|A_{u_0}(\tau,\cdot)\|_{L^{\infty}(\Rn)} d\tau} \\
&= A_{u_0}(t,x) \left[ 1 + \int_0^t \varphi(\tau) \frac{f((1+Z) \|A_{u_0}(\tau,\cdot)\|_{L^{\infty}(\Rn)})}{\|A_{u_0}(\tau,\cdot)\|_{L^{\infty}(\Rn)}} d\tau \right].
\end{aligned}
\]
Hence for $u_0(x) = \lambda w(x)$, with $1>\lambda > 0$ as in \eqref{ocenz2}, by the definition of the majorant function and the non-decreasing property of the function $\frac{f(v)}{v}$ one obtains

\[
\begin{aligned}
v_{\zeta+1}(t,x) &\leq A_{u_0}(t,x) + A_{u_0}(t,x) \int_0^t \varphi(\tau) \frac{f(\lambda(1+Z)\|A_w(\tau,\cdot)\|_{L^{\infty}(\Rn)})}{\lambda\|A_w(\tau,\cdot)\|_{L^{\infty}(\Rn)})} d\tau \\
&= A_{u_0}(t,x) \left[ 1 + \int_0^t \varphi(\tau) \frac{f(\lambda(1+Z)\|A_w(\tau,\cdot)\|_{L^{\infty}(\Rn)})}{\lambda\|A_w(\tau,\cdot)\|_{L^\infty(\Rn)}} d\tau \right].
\end{aligned}
\]

Now using the majorant function property \eqref{maj1} for $\alpha \in (0,1)$, with $\alpha = \sqrt{\lambda}$ and $v = \sqrt{\lambda}(1+Z)\|A_w(\tau,\cdot)\|_{L^\infty(\Rn)}$ we can estimate $v_{\zeta+1}$ further as

\[
\begin{aligned}
v_{\zeta+1} &\leq  A_{u_0}(t,x) + \frac{f_M(\sqrt{\lambda}(1+Z))}{\sqrt{\lambda}} A_{u_0}(t,x) \int_0^t \varphi(\tau) \frac{f(\sqrt{\lambda}\|A_w(\tau,\cdot)\|_{L^{\infty}(\Rn)})}{\sqrt{\lambda}\|A_w(\tau,\cdot)\|_{L^{\infty}(\Rn)}} d\tau \\
&\leq A_{u_0}(t,x) + \frac{\sqrt{\lambda}f_M(\sqrt{\lambda}(1+Z))}{\lambda} A_{u_0}(t,x) \int_0^\infty \varphi(\tau) \frac{f(\|A_w(\tau,\cdot)\|_{L^{\infty}(\Rn)})}{\|A_w(\tau,\cdot)\|_{L^{\infty}(\Rn)}} d\tau\,,
\end{aligned}
\]
so that, by using \eqref{epsilon} and recalling the definition of $Z$, we obtain 

\[
\begin{aligned}
v_{\zeta+1} &\leq B_{u_0}(t,x) + \frac{\sqrt{\lambda} \cdot \epsilon \sqrt{\lambda} (1+Z)}{\lambda} B_{u_0}(t,x) Z \\
&= B_{u_0}(t,x) + \epsilon (1+Z) Z B_{u_0}(t,x) \\
&= B_{u_0}(t,x) \left[ 1 + \epsilon (1+Z) Z \right].
\end{aligned}
\]

Finally, choosing $\epsilon$ such that $\epsilon (1+Z)  < 1$, we  get 

\[
v_{\zeta+1}(t,x) \leq (1+Z) B_{u_0}(t,x),
\]
which completes the proof of \eqref{vk<vo}.

Next, we will show, again by mathematical induction, that $v_{\zeta}\leq v_{\zeta+1}$ for all $\zeta \geq 0$. To this end, we note that the positivity $f$ and $\varphi$ give \[v_{0}(t,x)=B_{u_0}(t,x)\leq B_{u_0}(t,x)+\int_{0}^{t}\int_{\Rn}\Gamma(0,y;t-\tau,x)\varphi(\tau)f(v_{0}(\tau,y))dyd\tau=v_{1}(t,x)\,.\]   Assume now that for some fixed $\zeta\in\mathbb{N}$ the inequality $v_{\zeta}(t,x)\geq v_{\zeta-1}(t,x)$  holds true. Then, since $f$ is a non-decreasing function, it is immediate to see that we also have
\[
v_{\zeta+1}(t,x) \geq v_{\zeta}(t,x)\,,
\]
implying that $\{v_\zeta\}$ is a non-decreasing sequence with respect to $\zeta$.

The monotonicity of $\{v_\zeta\}$, together with the (uniform) upper bound for each $v_\zeta$ given by \eqref{vk<vo}, imply that the limit $\lim\limits\limits_{\zeta \rightarrow \infty}v_\zeta(t,x)$ exists globally. Additionally, by the continuity of $f$ together with the definition of $v_\zeta$ we get 
\[
\lim_{\zeta \rightarrow \infty}v_\zeta(t,x)=\int_{\Rn}\Gamma(0,y;t,x)u_{0}(y)dy+\int_{0}^{t}\int_{\Rn}\Gamma(0,y;t-\tau,x)\varphi(\tau)f(\lim_{\zeta \rightarrow \infty}v_{\zeta}(\tau,y))dyd\tau.
\]
Now since by \eqref{solutionto-t} $\lim_{\zeta \rightarrow \infty}v_\zeta(t,x)=u(t,x)$, the upper bound \eqref{vk<vo} implies that 
\begin{equation}\label{upper.bound.u}
u(t,x)\leq (1+Z) \int_{\Rn}\Gamma(0,y;t,x)u_{0}(y)dy\,,
    \end{equation}
 i.e., the solution $u(t,x)$ exists globally, and we have proved (i). 

 To establish (ii), consider any function \( u_0 \in L^1(\Rn) \) with \( u_0 > 0 \). It follows from (i) that for $B$ as above 
 \begin{equation}
     \label{a-b1}
     +\infty = \int_{0}^{1}\varphi(\tau)\frac{f(\|A_{u_0}(\tau,x)\|_{L^{\infty}(\Rn)})}{\|A_{u_0}(\tau,x)\|_{L^{\infty}(\Rn)}}\,d\tau+\int_{1}^{\infty}\varphi(\tau)\frac{f(\|A_{u_0}(\tau,x)\|_{L^{\infty}(\Rn)})}{\|A_{u_0}(\tau,x)\|_{L^{\infty}(\Rn)}}\,d\tau\,.
 \end{equation}
Observe that by the definition of $A_{u_0}(t,x)$ and the property \eqref{d} of the heat kernel, we derive the semigroup contraction 
\[
\|A_{u_0}(t,\cdot)\|_{L^{\infty}(\Rn)}\leq \|u_0\|_{L^{\infty}(\Rn)}\,.
\]
Combining this fact with the continuity and positivity of both \( f \) and \( \varphi \), and using that the function \( v \mapsto \frac{f(v)}{v} \) is non-decreasing, we obtain
\[
\frac{f\left(\|A_{u_0}(\tau,\cdot)\|_{L^{\infty}(\mathbb{R}^n)}\right)}{\|A_{u_0}(\tau,\cdot)\|_{L^{\infty}(\mathbb{R}^n)}} 
\leq \frac{f\left(\|u_0\|_{L^{\infty}(\Rn)}\right)}{\|u_0\|_{L^{\infty}(\Rn)}}.
\]
Therefore,
\[
\int_0^1 \varphi(\tau)\, \frac{f\left(\|A_{u_0}(\tau,\cdot)\|_{L^{\infty}(\mathbb{R}^n)}\right)}{\|A_{u_0}(\tau,\cdot)\|_{L^{\infty}(\mathbb{R}^n)}}\, d\tau 
\leq \frac{f\left(\|u_0\|_{L^{\infty}(\Rn)}\right)}{\|u_0\|_{L^{\infty}(\Rn)}} \int_0^1 \varphi(\tau)\, d\tau < +\infty\,,
\]
and by \eqref{a-b1} we get 
\begin{eqnarray}
\label{a-b41}
+\infty & = & \int_{1}^{\infty} \varphi(\tau)\, 
\frac{f\left(\|A_{u_0}(\tau,\cdot)\|_{L^{\infty}(\mathbb{R}^n)}\right)}{\|A_{u_0}(\tau,\cdot)\|_{L^{\infty}(\mathbb{R}^n)}}\, d\tau.
\end{eqnarray}
Using the upper bound for the heat kernel by \eqref{est2}, together with the control volume \eqref{ocencc1}, we obtain for $\rho>0$ and for all $t>0$

\begin{equation}\label{semigroup.est1}
    \begin{split}
          A_{u_0}(t,x) &\leq  \frac{\rho}{|B_X(x,\sqrt{t})|} \int_{\Rn} \exp\left( -\frac{d_X^2(x,y)}{\rho t} \right) u_0(y)\, dy\\&
        \leq \frac{\rho}{C t^{q/2}} \int_{\Rn} \exp\left( -\frac{d_X^2(x,y)}{\rho t} \right) u_0(y)\, dy.
    \end{split}
    \end{equation}
    Consequently, we have 
    \[
   \| A_{u_0}(t,\cdot)\|_{L^{\infty}(\Rn)} \leq \omega t^{-q/2}\,,
    \]
for $q$ as in \eqref{homdim}, where we have defined $\omega:= C^{-1}\rho \int_{\Rn}u_0(y)\,dy$.

Hence, since \( 0 \leq \varphi \), and using the monotonicity of the function \( \frac{f(v)}{v} \), we obtain
\begin{equation}
\label{eq:divergence}
+\infty = \int_{1}^{\infty} \varphi(\tau)\, 
\frac{f\left(\|A_{u_0}(\tau,\cdot)\|_{L^{\infty}(\mathbb{R}^n)}\right)}{\|A_{u_0}(\tau,\cdot))\|_{L^{\infty}(\mathbb{R}^n)}}\, d\tau
\leq \int_{1}^{\infty} \varphi(\tau)\, \frac{f(\omega \tau^{-\frac{q}{2}})}{\omega \tau^{-\frac{q}{2}}}\, d\tau,
\end{equation}
and the latter implies that
\[
\int_{1}^{\infty} \varphi(\tau)\, \frac{f(\omega \tau^{-\frac{q}{2}})}{\tau^{-\frac{q}{2}}}\, d\tau = +\infty.
\]
Since this holds for any \( 0 < u_0 \in L^1(\mathbb{R}^n) \), and thus for any \( \omega > 0 \), the proof is complete.

 \end{proof}

\end{document}